\providecommand{\U}[1]{\protect\rule{.1in}{.1in}}
\newtheorem{theorem}{Theorem}[section]
\newtheorem{definition}[theorem]{Definition}
\newtheorem{example}[theorem]{Example}
\newtheorem{notation}[theorem]{Notation}
\newtheorem{proposition}[theorem]{Proposition}
\newtheorem{remark}[theorem]{Remark}
\newenvironment{proof}[1][Proof]{\noindent \textbf{#1.} }{\  \rule{0.5em}{0.5em}}
\begin{document}

\title{Some Game Theoretic Remarks on \\Two-Player Generalized Cops and Robbers Games}
\author{Athanasios Kehagias and Georgios Konstantinidis}
\date{}
\maketitle

\begin{abstract}
In this paper we study the two-player \emph{generalized Cops and Robber} (GCR)
games introduced by Bonato and MacGillivray. Our main goal is to present a
full, self-contained game theoretic analysis of such games.

\end{abstract}

\section{Introduction\label{sec01}}

In this paper we present a \emph{game theoretic} analysis of the
\emph{two-player Generalized Cops and Robber} (henceforth GCR)
games\ introduced by Bonato and MacGillivray in \cite{Bonato2017}. GCR can be
understood as a general framework for \emph{pursuit games on graphs}. We have
two main goals.

First, while \cite{Bonato2017} presents a very broad and interesting
generalization of the \textquotedblleft classic\textquotedblright\ Cops and
Robbers (henceforth CR) \ game, it follows the tradition of the
CR\ literature, which is dominated by graph theoretic and combinatorial
arguments but pays little attention to core game theoretic concepts such as
the \emph{payoff function}, \emph{value} of a game, \emph{optimal strategies}
etc. Because we consider such concepts essential to the study of games (and in
particular of pursuit games on graphs), in the current paper we will try to
bring them to the foreground.

Secondly, as we will argue in the sequel, eschewing the classic game theoretic
analysis can result in nonrigorous treatment of certain game aspects. Hence
our second goal in using game theoretic tools is to fill certain gaps which
(we believe) exist in the analysis of GCR games \cite{Bonato2017} and even the
classic CR\ game \cite{Hahn2006}.

The \textquotedblleft classic\textquotedblright\ CR\ game was introduced
(independently)\ in \cite{Nowakowski1983} and
\cite{Quilliot1978,Quilliot1983,Quilliot1985} and has been the subject of
intense research ever since. A good and relatively recent review of the
literature appears in the excellent book \cite{Bonato2011} and additional
references can be found in \cite{Fomin2008,Hollinger2011}. As already
mentioned, Bonato and MacGillivray present in \cite{Bonato2017} a very broad
generalization of the classic CR\ game. The abovementioned reviews contain
references to many of the CR\ \textquotedblleft variants\textquotedblright%
\ \ which have appeared in the literature. Almost all such works use a graph
theoretic and/or combinatorial approach; as far as we are aware, very few
authors deal with the game theoretic aspects of both classic CR and its variants.

The rest of this paper is organized as follows. In Section \ref{sec02} we
present definitions and notations which will be used in the sequel. In Section
\ref{sec03} we \emph{solve} the GCR\ game, i.e., we compute its \emph{value}
and \emph{optimal strategies}. We present two different solutions. The
solution of Section \ref{sec0301} is based on a \emph{vertex labeling}
(VL)\ algorithm which has been used in \cite{Bonato2017,Hahn2006}; we prove
\emph{rigorously} that this algorithm computes the value and optimal
strategies (in the game theoretic sense)\ of the GCR\ game; furthermore, this
analysis is \emph{self-contained}, i.e., all our claims are proved within this
paper. The solution of Section \ref{sec0302}, on the other hand, first proves
the existence of value and \emph{afterwards} shows that the value can be
computed by the VL\ algorithm of Section \ref{sec0301}; this analysis is not
self-contained; several classic Game Theoretic results are used, in particular
the Minimax Theorem \cite{Thomas}. In Section \ref{sec04} \ we discuss the
connection of previous analyses of both GCR\ and classic CR to our own. \ We
summarize and conclude in Section \ref{sec05}. Finally, Appendix \ref{secA}
contains some useful, basic facts from Game Theory.

\section{Preliminaries\label{sec02}}

\subsection{Informal Description of the GCR\ Game\label{sec0201}}

We first provide an informal description of the GCR\ game. This description is
inspired by the one given in \cite{Bonato2017} but, as will be explained,
differs in several respects. \ 

To motivate our description, first recall the basic elements of the classic
CR\ game. CR\ is played on a graph $G$, in discrete time steps (\emph{turns}%
)\ and involves two players:\ the \emph{Cop} (or \emph{Pursuer}) and the
\emph{Robber} (or \emph{Evader}). At every turn each player is \emph{located}
on a vertex $v$ of $G$, and a \emph{single} player can move to a new location
(a vertex $v^{\prime}$ adjacent to $v$). The Cop wins if he \textquotedblleft
captures\textquotedblright\ the Robber, i.e., if at some turn they are both
located in the same vertex\footnote{Note that in the usual CR\ description
time is counted in \emph{rounds}, where each round encompasses (in our
terminology) one cop and one robber turn. The two approaches are equivalent.}. 

The \emph{generalized }CR game\emph{ } (GCR\ game) can be informally described
as follows. It involves two \emph{players}, the Pursuer\ and the Evader; the
game is played in turns; in each turn a single player can (subject to certain
restrictions) move between elements of a finite set, resulting in a new
\textquotedblleft configuration\textquotedblright\ of player locations; there
is a set of target (\textquotedblleft capture\textquotedblright%
)\ configurations; the Pursuer wins if a target configuration is reached and
the Evader wins otherwise.

It is easy to see that the classic CR\ is a special case of GCR game. \ As
explained in \cite{Bonato2017} (where details and references are provided) the
definition of GCR\ games also encompasses the following games:\ Distance-$k$
Cops and Robbers, Tandem-win Cops and Robbers, Cops and Robbers with Traps,
Eternal Domination, Revolutionaries and Spies, Seepage and many more.

\subsection{Formal Description of the GCR\ Game\label{sec0202}}

We now give, in several steps, our formal description of the GCR\ game; it is
quite similar but not identical to the one given in \cite{Bonato2017}.

\subsubsection{General Rules\label{sec020201}}

A GCR\ game involves two \emph{players}, $P^{1}$ (Pursuer)\ and $P^{2}$
(Evader); for $n\in\left\{  1,2\right\}  $, $P^{n}$'s possible
\emph{locations} are \ the elements of a finite set $V^{n}$.

A \emph{nonterminal game state} (or \emph{position}) is a triple $\left(
x^{1},x^{2},p\right)  $ where (for $n\in\left\{  1,2\right\}  $)\ $\ x^{n}\in
V^{n}$ indicates $P^{n}$'s \emph{location} and $p$ indicates the \emph{single}
player who currently \textquotedblleft has the move\textquotedblright\ (i.e.
can change his location). The set of all nonterminal game states is
\[
\overline{S}=V^{1}\times V^{2}\times\left\{  1,2\right\}  .
\]
We will also use a \emph{terminal state} $\tau$, so that the full state set is%
\[
S=\overline{S}\cup\left\{  \tau\right\}  .
\]

The game starts at a prespecified \emph{initial state} $s_{0}\in\overline{S}$
and is played in \emph{turns}; in each turn a \emph{single} player moves
(i.e., either changes location or stays in his current location).
\emph{Movement rules} will be described in Section \ref{sec020202}.

A \emph{target set }$S_{c}\subseteq\overline{S}$ is given; $S_{c}$ is the set
of \emph{capture states}. The set of \emph{noncapture states} \ is\emph{
}$S_{nc}=\overline{S}\backslash S_{c}$. So we have the partition%
\[
\overline{S}=S_{nc}\cup S_{c}.
\]
We will also need the sets
\[
\forall n\in\left\{  1,2\right\}  :S^{n}=\left\{  s=\left(  x^{1}%
,x^{2},n\right)  \right\}  ;
\]
i.e., $S^{n}$ is the set of states in which $P^{n}$ has the move. This results
in another partition of $\overline{S}$:%
\[
\overline{S}=S^{1}\cup S^{2}.
\]

It is assumed that both players have \emph{perfect information}, i.e., at
every turn of the game both players have complete knowledge of the way the
game has been played so far.

\begin{example}
\label{prop0201}\normalfont In the CR game $P^{1}$ is the Cop and $P^{2}$ is
the Robber. The location sets are $V^{1}=V^{2}=V$, the vertex set of a graph
$G=\left(  V,E\right)  $. The nonterminal state set is
\[
\overline{S}=\left\{  \left(  x^{1},x^{2},n\right)  :x^{1},x^{2}\in
V,n\in\left\{  1,2\right\}  \right\}  .
\]
The capture set $S_{c}$ is
\[
S_{c}=\left\{  \left(  x^{1},x^{1},n\right)  :x^{1}\in V,n\in\left\{
1,2\right\}  \right\}  .
\]
This is \emph{almost} exactly the classic CR\ game, with one difference:\ we
\ omit the classic \textquotedblleft placement phase\textquotedblright\ in
which first the Cop and then the Robber choose initial positions. We assume
instead that both initial positions (as well as the first player to move)\ are
given by the prespecified initial state $s_{0}=\left(  x_{0}^{1},x_{0}%
^{2},n_{0}\right)  $.
\end{example}

\subsubsection{Movement Rules\label{sec020202}}

At each turn, a player can change his location subject to some movement rules,
which are specific to each particular GCR\ game. For example, in the classic
CR\ game each player can move from his current vertex to any adjacent vertex
(or stay in place).

To specify movement rules, we will often use the following standard game
theoretic notation: for each $n\in\left\{  1,2\right\}  $ we use $-n$ to
indicate the index of the \textquotedblleft other player\textquotedblright%
\ (obviously, $-n$\emph{ }is \emph{not }meant as the negative of $n$). For
example, suppose player $P^{n}$ has location $x^{n}$, then the
\textquotedblleft other\textquotedblright\ player is $P^{-n}$ and has location
$x^{-n}$.

We have already mentioned that each game state specifies which player makes
the next move. Unlike both classic CR\ and the GCR\ of \cite{Bonato2017}%
,\ \emph{we do not demand that the players alternate in taking moves}; in
making a move, a player may take the game to a state in which he again has the move.

For every state $s=\left(  x^{n},x^{-n},n\right)  $ there exists a nonempty
set of possible next moves for $P^{n}$ (the player who has the move). To each
such move corresponds  a unique next state and conversely. Hence we fully
describe possible next moves from state $s$ by the set $N\left(  s\right)  $
of possible next states. For instance, with $s=\left(  x^{n},x^{-n},n\right)
\in\overline{S}$ we have
\[
N\left(  x^{n},x^{-n},n\right)  =\left\{  \left(  y,x^{-n},-n\right)  :y\in
M\left(  x^{n},x^{-n},n\right)  \right\}  .
\]
We assume that the GCR\ game is played for an \emph{infinite} number of
turns\footnote{This is done to conform to a \emph{stochastic game}
formulation, as will be explained in the sequel.}. That is, even after a
capture state is reached, the game will continue ad infinitum, but in a
trivial manner. Namely, in every GCR\ game the only successor of a capture
state is the terminal state:%
\[
s\in S_{c}\Rightarrow N\left(  s\right)  =\left\{  \tau\right\}
\]
which can only transit into itself%
\[
N\left(  \tau\right)  =\left\{  \tau\right\}  .
\]
In short, the movement rules of a particular GCR\ game can be encoded by
\[
\mathbf{N}=\left(  N\left(  s\right)  \right)  _{s\in S},
\]
the collection of possible next moves of each state. Note that $\mathbf{N}$
also specifies (implicitly)\ the state set $S$ which in turn specifies the
location sets $V^{1}$, $V^{2}$.

\begin{example}
\label{prop0202}\normalfont Continuing from Example \ref{prop0201}, in the CR
game played on the graph $G=\left(  V,E\right)  $ we have
\[
\forall s=\left(  x^{n},x^{-n},n\right)  \in\overline{S}:N\left(  s\right)
=\left\{  \left(  z,x^{-p},p\right)  :\left\{  x^{p},z\right\}  \in E\right\}
\text{ and }N\left(  \tau\right)  =\left\{  \tau\right\}  .
\]

\end{example}

\subsubsection{Payoff Functions, Winning Conditions\label{sec020203}}

It remains to define \emph{winning conditions} for the GCR\ game. In
accordance with the classic CR\ game, a reasonable definition is that the
Pursuer wins iff a capture state is reached. To conform with the game
theoretic formulation, this definition should be given, in terms of a
\emph{payoff function}. Actually, we will use a somewhat different definition
which involves \emph{two} payoff functions, as seen below.

\begin{definition}
\label{prop0203}The \emph{turn payoff function} $q\left(  s\right)
\ $specifies the amount gained by the Evader (and lost by the Pursuer)\ at
every turn of GCR\ in which the current state is $s$; it is defined by%
\begin{equation}
q\left(  s\right)  =\left\{
\begin{array}
[c]{ll}%
1 & \text{iff \ }s\in S_{nc}\\
0 & \text{iff \ }s\in S_{c}\cup\left\{  \tau\right\}
\end{array}
\right.  . \label{eq02001}%
\end{equation}

\end{definition}

\begin{definition}
\label{prop0204}The \emph{total payoff function} $Q\left(  s_{0}%
s_{1}...\right)  \ $specifies the amount gained by the Evader (and lost by the
Pursuer)\ in a play of GCR\ with state sequence $s_{0}s_{1}...$; it is defined
by%
\begin{equation}
Q\left(  s_{0}s_{1}s_{2}...\right)  =\sum_{t=0}^{\infty}q\left(  s_{t}\right)
. \label{eq02002}%
\end{equation}

\end{definition}

\noindent The following remarks can be made regarding the significance of the
payoff functions.

\begin{enumerate}
\item When the game \ goes through the state sequence $s_{0}s_{1}...$ ,
clearly $Q\left(  s_{0}s_{1}s_{2}...\right)  $ is the \emph{capture time},
i.e., the number of turns until capture is effected; if $Q\left(  s_{0}%
s_{1}s_{2}...\right)  =\infty$ then capture is never effected.

\item It is also clear that, by the above payoff functions, GCR\ is a
\emph{zero-sum} game. The Pursuer wants to minimize (and the Evader wants to
maximize)\ capture time.

\item Hence we have refined the original winning condition:\ the Pursuer not
only wants to capture the Evader in finite time (and hence win); he wants to
capture in the \emph{shortest possible} time.
\end{enumerate}

\subsubsection{The Stochastic Game GCR\label{sec020204}}

Keeping in mind all of the above, we have the following two definitions.

\begin{definition}
\label{prop0205}A \emph{GCR\ game} is a tuple $\left(  \mathbf{N},S^{1}%
,S^{2},S_{c},s_{0}\right)  $ where

\begin{enumerate}
\item $\mathbf{N}$ describes the movement rules (and implicitly the state set
$S$ and the location sets $V^{1},V^{2}$);

\item $S^{1}$ and $S^{2}$ describe which player has the move in every state;

\item $S_{c}$ describes the capture (winning)\ condition;

\item $s_{0}$ is the initial state.
\end{enumerate}
\end{definition}

\begin{definition}
\label{prop0206}A \emph{GCR\ game family} is a tuple $\left(  \mathbf{N,}%
S^{1},S^{2},S_{c}\right)  $.
\end{definition}

By the above definitions,\ given the game family $\left(  \mathbf{N,}%
S^{1},S^{2},S_{c}\right)  $ and a specific initial state $s_{0}\in S$, we
obtain a particular game $\left(  \mathbf{N},S^{1},S^{2},S_{c},s_{0}\right)
$. In other words, every element of the family $\left(  \mathbf{N,}S^{1}%
,S^{2},S_{c}\right)  $ is a game played by the same rules but from a different
initial state. In Section \ref{sec03} we will mostly prove properties of the
entire family $\left(  \mathbf{N,}S^{1},S^{2},S_{c}\right)  $ which will also
imply corresponding properties of all specific games $\left(  \mathbf{N}%
,S^{1},S^{2},S_{c},s_{0}\right)  $. \ 

Now recall the definition of a stochastic game \cite{filar1996}. It is a
sequence of one-turn games; at every turn a particular one-turn game is
played, the players receive their \emph{turn payoff} and the next game to be
played is selected, depending on the current game and players' actions; the
\emph{total payoff} to each player is the sum of his turn
payoffs\footnote{Despite the term \textquotedblleft
stochastic\textquotedblright, the above definition contains as a special case
a game which evolves in a fully deterministic manner (when all player actions
and transitions to the next game are deterministic).}. Obviously, a
GCR\ \emph{game family }is a \emph{stochastic game} and we could, at this
point, obtain the full GCR\ solution by invoking well known stochastic games
results \cite{filar1996}. However we will avoid this route since, as
mentioned, we want to present a self-contained solution.

At first sight, our definition of GCR\ games and the one given by Bonato and
MacGillivray \cite{Bonato2017} differ in several respects; these differences
will be discussed in Section \ref{sec04}, where we will show that our results
also apply to the Bonato-MacGillivray \ games. Here we only mention what we
consider to be the most important difference. Namely, the GCR\ game of
\cite{Bonato2017} starts with a \textquotedblleft placement
phase\textquotedblright, in which first $P^{1}$ and then $P^{2}$ chooses his
initial location. This yields an initial state $s_{0}$ and the remaining part
of the Bonato-MacGillivray game is basically our $\left(  \mathbf{N}%
,S^{1},S^{2},S_{c},s_{0}\right)  $ game.

\subsection{Additional Game Theoretic Concepts\label{sec0203}}

We conclude this section by presenting some additional standard game theoretic
concepts which will prove useful in the sequel. In what follows we assume that
a game family $\left(  \mathbf{N,}S^{1},S^{2},S_{c}\right)  $ has been
specified, so it is usually omitted from the notation.

\begin{definition}
\label{prop02007}A \emph{history} $h=s_{0}s_{1}...$ is a finite or infinite
sequence of states.
\end{definition}

\begin{definition}
\label{prop02008}We define the following sets of histories%
\begin{align*}
\text{the set of finite length histories}  &  :H_{\ast}=\left\{
h:h=s_{0}s_{1}s_{2}...s_{T}\text{, }T\in\mathbb{N}_{0}\text{, }\forall
t:s_{t}\in S\right\}  ,\\
\text{the set of infinite length histories}  &  :H_{\infty}=\left\{
h:h=s_{0}s_{1}s_{2}...\text{, }\forall t:s_{t}\in S\right\}  .
\end{align*}

\end{definition}

\begin{definition}
\label{prop02009}A (\emph{pure} or \emph{deterministic})\ \emph{strategy
}$\sigma:H_{\ast}\rightarrow S$ is a function which maps finite histories to
\emph{next states}.
\end{definition}

\noindent Regarding the above definition, the following points must be emphasized.

\begin{enumerate}
\item We could also have defined \emph{randomized}\footnote{More precisely,
\emph{behavioral }strategies \cite{filar1996}.} strategies but these will not
be needed in our analysis, because the GCR\ game has perfect information.

\item We deviate from the \textquotedblleft standard\textquotedblright\ pure
strategy definition. In the context of stochastic games, the usual definition
specifies a strategy as a function which maps finite histories to next
\emph{moves}. However, since the GCR\ game evolves deterministically, a move
specifies the next state. Hence our definition is sufficient (and more
convenient) for our purposes.
\end{enumerate}

\begin{definition}
\label{prop02010}A strategy $\sigma^{m}$ is called \emph{positional}\ if it
depends only on the current state $s_{t}$, but neither on previous states nor
on current time $t$, i.e.,
\[
\forall h=s_{0}s_{1}...s_{t},h^{\prime}=s_{0}^{\prime}s_{1}^{\prime
}...,s_{t^{\prime}}^{\prime}:s_{t}=s_{t^{\prime}}^{\prime}=s\Rightarrow
\sigma^{m}\left(  h\right)  =\sigma^{m}\left(  h^{\prime}\right)  =\sigma
^{m}\left(  s\right)  .
\]
A strategy is called \ \emph{nonpositional }iff it is not positional.
\end{definition}

\begin{notation}
\label{prop02011}$H\left(  \sigma^{1},\sigma^{2}|\mathbf{N},S^{1},S^{2}%
,S_{c},s_{0}\right)  $ denotes the infinite history $s_{0}s_{1}s_{2}...$
generated when:\ (i) the game is $\left(  \mathbf{N},S^{1},S^{2},S_{c}%
,s_{0}\right)  $ and (ii)\ for $n\in\left\{  1,2\right\}  $, $P^{n}$ uses
$\sigma^{n}$. When $\left(  \mathbf{N,}S^{1},S^{2},S_{c}\right)  $ is
understood from the context, we simply write $H\left(  \sigma^{1},\sigma
^{2}|s_{0}\right)  .$
\end{notation}

\begin{notation}
\label{prop02012}$T\left(  \sigma^{1},\sigma^{2}|\mathbf{N},S^{1},S^{2}%
,S_{c},s_{0}\right)  $ denotes the \emph{capture time}, i.e., the first
(actually the only)\ time at which a capture state $s\in S_{c}$ is reached
when:\ (i) the game is $\left(  \mathbf{N},S^{1},S^{2},S_{c},s_{0}\right)  $
and (ii)$\ $\ for $n\in\left\{  1,2\right\}  $, $P^{n}$ uses $\sigma^{n}$.
When $\left(  \mathbf{N,}S^{1},S^{2},S_{c}\right)  $ is understood from the
context, we simply write $T\left(  \sigma^{1},\sigma^{2}|s_{0}\right)  $.
\end{notation}

\begin{remark}
\label{prop02013}\normalfont It is obvious that
\[
T\left(  \sigma^{1},\sigma^{2}|\mathbf{N},S^{1},S^{2},S_{c},s_{0}\right)
=Q\left(  H\left(  \sigma^{1},\sigma^{2}|\mathbf{N},S^{1},S^{2},S_{c}%
,s_{0}\right)  \right)  .
\]

\end{remark}

\bigskip

\noindent We conclude by presenting several well-known definitions and facts
about zero-sum games in a notation specific to a GCR\ game\footnote{The
general form of these facts appears in Appendix \ref{secA}.}. What follows
concerns a specific game $\left(  \mathbf{N,}S^{1},S^{2},S_{c}\right)  $,
which we assume known and (for brevity)\ do not include in the notation.

\begin{definition}
\label{prop02014}For every $s$ we define the following two quantities%
\begin{align*}
\text{\emph{lower value} of }\left(  \mathbf{N,}S^{1},S^{2},S_{c}\right)   &
:T^{-}\left(  s\right)  =\sup_{\sigma^{2}}\inf_{\sigma^{1}}T\left(  \sigma
^{1},\sigma^{2}|s\right)  ,\\
\text{\emph{upper value} of }\left(  \mathbf{N,}S^{1},S^{2},S_{c}\right)   &
:T^{+}\left(  s\right)  =\inf_{\sigma^{1}}\sup_{\sigma^{2}}T\left(  \sigma
^{1},\sigma^{2}|s\right)  .
\end{align*}

\end{definition}

\begin{proposition}
\label{prop02015}For every $s$ we have
\[
T^{-}\left(  s\right)  =\sup_{\sigma^{2}}\inf_{\sigma^{1}}T\left(  \sigma
^{1},\sigma^{2}|s\right)  \leq\inf_{\sigma^{1}}\sup_{\sigma^{2}}T\left(
\sigma^{1},\sigma^{2}|s\right)  =T^{+}\left(  s\right)  .
\]

\end{proposition}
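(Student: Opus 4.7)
The statement is the standard ``sup-inf $\le$ inf-sup'' (weak minimax) inequality, applied to the real-valued (or extended real-valued) function $(\sigma^1,\sigma^2)\mapsto T(\sigma^1,\sigma^2\mid s)$, so I would give the classical two-line argument rather than anything specific to GCR. Note that no measurability or topology issues arise: for fixed $s$ the quantity $T(\sigma^1,\sigma^2\mid s)$ is just an element of $\mathbb{N}_0\cup\{\infty\}$ determined by the deterministic play of the two pure strategies from $s$, and the $\sup$ and $\inf$ are over the sets of such strategies (possibly $+\infty$, which is fine because we work in the extended reals).

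The plan is as follows. Fix $s$ and arbitrary pure strategies $\sigma^1_0$ and $\sigma^2_0$. From the definition of infimum we have
\[
\inf_{\sigma^1} T(\sigma^1,\sigma^2_0\mid s)\;\le\; T(\sigma^1_0,\sigma^2_0\mid s),
\]
and from the definition of supremum
\[
T(\sigma^1_0,\sigma^2_0\mid s)\;\le\;\sup_{\sigma^2} T(\sigma^1_0,\sigma^2\mid s).
\]
Chaining these gives, for every $\sigma^1_0,\sigma^2_0$,
\[
\inf_{\sigma^1} T(\sigma^1,\sigma^2_0\mid s)\;\le\;\sup_{\sigma^2} T(\sigma^1_0,\sigma^2\mid s).
\]

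Now the left-hand side depends only on $\sigma^2_0$ and the right-hand side only on $\sigma^1_0$, and the inequality holds for all choices of the two. Hence I would first take the supremum over $\sigma^2_0$ on the left (the right side being independent of $\sigma^2_0$) to obtain
\[
\sup_{\sigma^2}\inf_{\sigma^1} T(\sigma^1,\sigma^2\mid s)\;\le\;\sup_{\sigma^2} T(\sigma^1_0,\sigma^2\mid s)\qquad\text{for every }\sigma^1_0,
\]
and then take the infimum over $\sigma^1_0$ on the right to conclude
\[
\sup_{\sigma^2}\inf_{\sigma^1} T(\sigma^1,\sigma^2\mid s)\;\le\;\inf_{\sigma^1}\sup_{\sigma^2} T(\sigma^1,\sigma^2\mid s),
\]
which is exactly $T^{-}(s)\le T^{+}(s)$ by Definition \ref{prop02014}.

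There is no genuine obstacle here, since the argument uses only the order-theoretic properties of $\sup$ and $\inf$ on $\mathbb{N}_0\cup\{\infty\}$ and is independent of any structural features of GCR. The only thing worth flagging is that both values may be $+\infty$ (for example when the Evader has a strategy avoiding $S_c$ forever), but the inequality is still valid in the extended reals, which is the convention adopted by Definitions \ref{prop0204} and \ref{prop02014}.
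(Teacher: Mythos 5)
Your proof is correct and is the standard order-theoretic argument for the weak minimax inequality; the paper itself gives no proof of Proposition \ref{prop02015}, listing it among well-known facts whose proofs are deferred to the cited game theory texts, and your two-step chaining of the defining inequalities for $\inf$ and $\sup$ (valid in the extended reals) is exactly the argument those references supply.
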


\begin{definition}
\label{prop02016}We say that game $\left(  \mathbf{N,}S^{1},S^{2}%
,S_{c}\right)  $ has a \emph{value} $\widehat{T}\left(  s\right)  \ $iff
\[
T^{-}\left(  s\right)  =\sup_{\sigma^{2}}\inf_{\sigma^{1}}T\left(  \sigma
^{1},\sigma^{2}|s\right)  =\inf_{\sigma^{1}}\sup_{\sigma^{2}}T\left(
\sigma^{1},\sigma^{2}|s\right)  =T^{+}\left(  s\right)  .
\]
in which case we define $\widehat{T}\left(  s\right)  $\ to be%
\[
\widehat{T}\left(  s\right)  =\sup_{\sigma^{2}}\inf_{\sigma^{1}}T\left(
\sigma^{1},\sigma^{2}|s\right)  =\inf_{\sigma^{1}}\sup_{\sigma^{2}}T\left(
\sigma^{1},\sigma^{2}|s\right)  .
\]

\end{definition}

\begin{proposition}
\label{prop02016a}We say that the game $\left(  \mathbf{N,}S^{1},S^{2}%
,S_{c},s\right)  $ has \emph{optimal strategies} $\widehat{\sigma}%
^{1},\widehat{\sigma}^{2}$ iff
\[%
\begin{array}
[c]{l}%
\forall\sigma^{2}:\text{ }T\left(  \widehat{\sigma}^{1},\sigma^{2}|s\right)
\leq T\left(  \widehat{\sigma}^{1},\widehat{\sigma}^{2}|s\right)  ,\\
\forall\sigma^{1}:T\left(  \sigma^{1},\widehat{\sigma}^{2}|s\right)
\geq\widehat{T}\left(  \widehat{\sigma}^{1},\widehat{\sigma}^{2}|s\right)  .
\end{array}
\]

\end{proposition}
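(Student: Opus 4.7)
The plan is to read Proposition~\ref{prop02016a} as the standard saddle-point characterization of optimality: the pair $(\widehat{\sigma}^{1},\widehat{\sigma}^{2})$ is optimal (in the sense of achieving the infimum/supremum in $T^{+}(s)$ and $T^{-}(s)$ and certifying that the game has a value) if and only if the two displayed inequalities hold. The typo $\widehat{T}$ on the right-hand side is to be read as $T$, so the content is that, for every $\sigma^{1},\sigma^{2}$,
\[
T(\widehat{\sigma}^{1},\sigma^{2}|s)\;\le\;T(\widehat{\sigma}^{1},\widehat{\sigma}^{2}|s)\;\le\;T(\sigma^{1},\widehat{\sigma}^{2}|s). \tag{$\ast$}
\]

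For the ``$\Leftarrow$'' direction, I would start from ($\ast$) and apply the usual minimax squeeze. Taking $\sup_{\sigma^{2}}$ on the leftmost expression and $\inf_{\sigma^{1}}$ on the rightmost expression in ($\ast$) gives
\[
\sup_{\sigma^{2}}T(\widehat{\sigma}^{1},\sigma^{2}|s)\;\le\;T(\widehat{\sigma}^{1},\widehat{\sigma}^{2}|s)\;\le\;\inf_{\sigma^{1}}T(\sigma^{1},\widehat{\sigma}^{2}|s).
\]
Then I would insert one further inf and sup to obtain
\[
T^{+}(s)=\inf_{\sigma^{1}}\sup_{\sigma^{2}}T\;\le\;\sup_{\sigma^{2}}T(\widehat{\sigma}^{1},\sigma^{2}|s)\;\le\;T(\widehat{\sigma}^{1},\widehat{\sigma}^{2}|s)\;\le\;\inf_{\sigma^{1}}T(\sigma^{1},\widehat{\sigma}^{2}|s)\;\le\;\sup_{\sigma^{2}}\inf_{\sigma^{1}}T=T^{-}(s).
\]
Proposition~\ref{prop02015} supplies the reverse inequality $T^{-}(s)\le T^{+}(s)$, so the whole chain collapses to equalities. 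This simultaneously shows that the game has value $\widehat{T}(s)=T(\widehat{\sigma}^{1},\widehat{\sigma}^{2}|s)$, that $\widehat{\sigma}^{1}$ attains $T^{+}(s)$, and that $\widehat{\sigma}^{2}$ attains $T^{-}(s)$, i.e.\ both are optimal in the standard sense.

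For the ``$\Rightarrow$'' direction, I would assume the game has value $\widehat{T}(s)$ and that $\widehat{\sigma}^{1},\widehat{\sigma}^{2}$ attain the outer $\inf$ and $\sup$ respectively. Then
\[
\widehat{T}(s)=\sup_{\sigma^{2}}T(\widehat{\sigma}^{1},\sigma^{2}|s)\ge T(\widehat{\sigma}^{1},\widehat{\sigma}^{2}|s)\ge\inf_{\sigma^{1}}T(\sigma^{1},\widehat{\sigma}^{2}|s)=\widehat{T}(s),
\]
which forces $T(\widehat{\sigma}^{1},\widehat{\sigma}^{2}|s)=\widehat{T}(s)$ and recovers both inequalities in ($\ast$) directly from the definitions of the two envelopes.

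There is no real obstacle here: the argument is pure bookkeeping with $\sup$ and $\inf$, driven entirely by Proposition~\ref{prop02015}. The only mildly delicate point is being explicit about what ``optimal'' is supposed to mean in the statement (since the paper appears to introduce this word here for the first time); I would resolve this by agreeing that optimality, both for the Pursuer and for the Evader, is the property of attaining the respective outer envelope in Definition~\ref{prop02014}, and then read Proposition~\ref{prop02016a} as the claim that this coincides with the saddle-point property ($\ast$). Everything else reduces to the chain of inequalities above.
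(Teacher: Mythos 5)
Your proposal is correct and follows essentially the same route the paper takes: the general form of this equivalence is the final proposition of Appendix~\ref{secA} (the $\mathbf{C1}\Leftrightarrow\mathbf{C2}$ statement), whose proof is exactly your $\sup$/$\inf$ squeeze collapsing the chain via $T^{-}(s)\le T^{+}(s)$ from Proposition~\ref{prop02015}. You also correctly diagnose the two quirks of the statement --- that it is really a definition-plus-characterization and that $\widehat{T}(\widehat{\sigma}^{1},\widehat{\sigma}^{2}|s)$ should read $T(\widehat{\sigma}^{1},\widehat{\sigma}^{2}|s)$ --- and resolve them the same way the paper does.
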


\begin{proposition}
\label{prop02017}The game $\left(  \mathbf{N,}S^{1},S^{2},S_{c},s\right)  $
has value $\widehat{T}\left(  s\right)  $ and optimal strategies
$\widehat{\sigma}^{1},\widehat{\sigma}^{2}$ iff
\[%
\begin{array}
[c]{l}%
\forall\sigma^{2}:\text{ }T\left(  \widehat{\sigma}^{1},\sigma^{2}|s\right)
\leq\widehat{T}\left(  s\right)  ,\\
\forall\sigma^{1}:T\left(  \sigma^{1},\widehat{\sigma}^{2}|s\right)
\geq\widehat{T}\left(  s\right)  .
\end{array}
\]

\end{proposition}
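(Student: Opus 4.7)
The plan is to split the biconditional and reduce each direction to Proposition \ref{prop02015}, Definition \ref{prop02016}, and Proposition \ref{prop02016a} via straightforward $\sup/\inf$ manipulations; there are no real obstacles, only bookkeeping about what the existence of value and of optimal strategies respectively contribute.

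For the forward direction I assume the game has value $\widehat{T}(s)$ and optimal strategies $\widehat{\sigma}^{1},\widehat{\sigma}^{2}$. First I would show that $T(\widehat{\sigma}^{1},\widehat{\sigma}^{2}|s)=\widehat{T}(s)$. From Proposition \ref{prop02016a} the inequality $T(\widehat{\sigma}^{1},\sigma^{2}|s)\leq T(\widehat{\sigma}^{1},\widehat{\sigma}^{2}|s)$ for all $\sigma^{2}$ yields $\sup_{\sigma^{2}}T(\widehat{\sigma}^{1},\sigma^{2}|s)=T(\widehat{\sigma}^{1},\widehat{\sigma}^{2}|s)$, hence $T^{+}(s)=\inf_{\sigma^{1}}\sup_{\sigma^{2}}T\leq T(\widehat{\sigma}^{1},\widehat{\sigma}^{2}|s)$. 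The symmetric use of the second inequality in Proposition \ref{prop02016a} gives $T^{-}(s)\geq T(\widehat{\sigma}^{1},\widehat{\sigma}^{2}|s)$. Combined with Proposition \ref{prop02015} and the assumed equality $T^{-}(s)=T^{+}(s)=\widehat{T}(s)$ from Definition \ref{prop02016}, I conclude $T(\widehat{\sigma}^{1},\widehat{\sigma}^{2}|s)=\widehat{T}(s)$. Substituting this into the inequalities of Proposition \ref{prop02016a} yields exactly the two inequalities in the statement.

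For the reverse direction I assume the two inequalities $T(\widehat{\sigma}^{1},\sigma^{2}|s)\leq\widehat{T}(s)$ and $T(\sigma^{1},\widehat{\sigma}^{2}|s)\geq\widehat{T}(s)$ hold for all $\sigma^{1},\sigma^{2}$. Specializing the first to $\sigma^{2}=\widehat{\sigma}^{2}$ and the second to $\sigma^{1}=\widehat{\sigma}^{1}$ sandwiches $T(\widehat{\sigma}^{1},\widehat{\sigma}^{2}|s)$ to be exactly $\widehat{T}(s)$. Substituting back shows the Proposition \ref{prop02016a} optimality conditions, so $\widehat{\sigma}^{1},\widehat{\sigma}^{2}$ are optimal. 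For the existence of value I note that the first inequality gives $T^{+}(s)=\inf_{\sigma^{1}}\sup_{\sigma^{2}}T(\sigma^{1},\sigma^{2}|s)\leq\sup_{\sigma^{2}}T(\widehat{\sigma}^{1},\sigma^{2}|s)\leq\widehat{T}(s)$, and the second gives $T^{-}(s)=\sup_{\sigma^{2}}\inf_{\sigma^{1}}T(\sigma^{1},\sigma^{2}|s)\geq\inf_{\sigma^{1}}T(\sigma^{1},\widehat{\sigma}^{2}|s)\geq\widehat{T}(s)$. Combined with Proposition \ref{prop02015} these force $T^{-}(s)=T^{+}(s)=\widehat{T}(s)$, i.e., value exists and equals $\widehat{T}(s)$.

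The only point requiring mild care is avoiding a circularity: in Definition \ref{prop02016} the symbol $\widehat{T}(s)$ is introduced \emph{only when} value exists, so in the reverse direction I must treat the $\widehat{T}(s)$ appearing in the hypothesis as an \emph{a priori} real number and then recover that it agrees with $\sup_{\sigma^{2}}\inf_{\sigma^{1}}T=\inf_{\sigma^{1}}\sup_{\sigma^{2}}T$ as above. No tightness or compactness argument is needed, because the $\sup$/$\inf$ manipulations are one-sided and exploit the specific candidates $\widehat{\sigma}^{1},\widehat{\sigma}^{2}$.
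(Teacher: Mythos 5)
Your argument is correct and matches the paper's own treatment: Proposition \ref{prop02017} is stated without proof in the main text and its general form is proved in Appendix \ref{secA} (the equivalence of \textbf{C1} and \textbf{C2}), where exactly the same $\sup/\inf$ sandwich is used — specialize the saddle inequalities at $\widehat{\sigma}^{1},\widehat{\sigma}^{2}$ to pin down $T(\widehat{\sigma}^{1},\widehat{\sigma}^{2}|s)$, then bound $T^{+}$ from above and $T^{-}$ from below and invoke $T^{-}\leq T^{+}$. Your remark about reading $\widehat{T}(s)$ in the reverse direction as an a priori quantity (in $[0,\infty]$) rather than as the not-yet-established value is a sensible clarification that the paper leaves implicit.
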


\section{Solution of the GCR\ Game\label{sec03}}

In Section \ref{sec0202} we have formulated GCR\ as a two-player, zero-sum
stochastic game family $\left(  \mathbf{N,}S^{1},S^{2},S_{c}\right)  $. By
\textquotedblleft solving $\left(  \mathbf{N,}S^{1},S^{2},S_{c}\right)
$\textquotedblright\ we mean proving that, for every $s\in\overline{S}$, the
game $\left(  \mathbf{N,}S^{1},S^{2},S_{c},s\right)  $ has value and optimal
strategies, as well as computing these quantities. All of these things can be
achieved by using standard results regarding stochastic games \cite{filar1996}%
. However, as mentioned, we want to provide a self-contained solution.
Actually, we will provide two such solutions, one in Section \ref{sec0301} and
another in Section \ref{sec0302}.

\subsection{Vertex Labeling Solution\label{sec0301}}

Our first solution of $\left(  \mathbf{N},S^{1},S^{2},S_{c}\right)  $ is
obtained by studying the properties of a \emph{vertex labeling} algorithm
(VL\ algorithm) which is a modification of an algorithm first presented in
\cite{Hahn2006} (for the CR\ game) and then in \cite{Bonato2017} (for the
GCR\ game). Our analysis is more detailed than the ones presented in
\cite{Hahn2006,Bonato2017}, mainly because we spell out all the game theoretic
aspects missing from \cite{Hahn2006,Bonato2017}. The differences between our
approach and that of \cite{Hahn2006,Bonato2017} will be discussed in Section
\ref{sec04}.

\begin{algorithm}[H]
\caption{\textbf{: The Vertex Labeling Algorithm}} \begin{algorithmic}[1]
\REQUIRE The game family $\left(\mathbf{N},S^1,S^2,S_c\right)$.
\FOR{$s\in \overline{S}$}
\IF{$s\in S_{c}$}
\STATE $T^{0}\left(  s\right)  =0$
\ELSE
\STATE $T^{0}\left(  s\right)  =\infty$
\ENDIF
\ENDFOR
\FOR{$i=1,2,..$}
\FOR{$s\in \overline{S}$}
\IF{$T^{i-1}\left(  s\right)  <\infty$}
\STATE $T^{i}\left(  s\right)  =T^{i-1}\left(  s\right)$
\ELSIF{$s\in S^{1}$}
\STATE $T^{i}\left(s\right)  =1+\min_{s^{\prime}\in N\left(  s\right)  }T^{i-1}\left(s^{\prime}\right)  $
\ELSIF{$s\in S^{2}$}
\STATE $T^{i}\left(s\right)  =1+\max_{s^{\prime}\in N\left(  s\right)  }T^{i-1}\left(s^{\prime}\right)  $
\ENDIF
\ENDFOR
\ENDFOR
\end{algorithmic}
\end{algorithm}

\noindent The VL\ algorithm produces, for every $s\in S$, an infinite sequence%
\[
T^{0}(s),T^{1}(s),T^{2}(s),....
\]
The only two forms which $\left(  T^{i}(s)\right)  _{i\in\mathbb{N}_{0}}$ can
take are:\ 

\begin{enumerate}
\item for every $i\in\mathbb{N}_{0}$, $T^{i}(s)$ has the same value (which is
either $0$ or $\infty$),

\item there exists some $n\in%
\mathbb{N}
$ such that $T^{i}(s)=\infty$ for $i\in\{0,...,n-1\}$ and $T^{i}(s)=m\in%
\mathbb{N}
$ for $i\in\{n,n+1,...\}$.
\end{enumerate}

\noindent Hence the following are well defined (with the understanding that
$\min\emptyset=\infty$).

\begin{definition}
\label{prop03001}For all $s\in\overline{S}$ we define
\begin{align}
\overline{T}(s)  &  =\lim_{i\rightarrow\infty}T^{i}(s),\label{eq03000}\\
\widetilde{T}(s)  &  =\min\{i:T^{i}(s)<\infty\}. \label{eq03001}%
\end{align}

\end{definition}

\noindent Our next goal is to show that, for all $s\in\overline{S\noindent}$,
$\overline{T}\left(  s\right)  =\widetilde{T}\left(  s\right)  $. In other
words, if $T^{i}\left(  s\right)  $ attains a finite value at the $i$-th
iteration of the VL\ algorithm, this value (which equals the limit
$\overline{T}\left(  s\right)  $)\ will be $i$. We need the following
auxiliary proposition.

\begin{proposition}
\label{prop03002}For all $s\in\overline{S\noindent}$ and for all
$n\in\mathbb{N}_{0}$ we have
\begin{equation}
\left(  \widetilde{T}\left(  s\right)  =n\right)  \Rightarrow\left(  \forall
i\geq n:T^{i}\left(  s\right)  =\overline{T}\left(  s\right)  \leq n\right)  .
\label{eq03036}%
\end{equation}

\end{proposition}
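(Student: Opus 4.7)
\bigskip

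\noindent\textbf{Proof proposal.} The plan is to argue by strong induction on $n$, after first extracting a simple ``monotone stabilization'' observation directly from the pseudocode of the VL algorithm.

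First I would note the following immediate consequence of lines 10--11 of the algorithm: whenever $T^{i-1}(s)<\infty$, the update rule sets $T^{i}(s)=T^{i-1}(s)$. Consequently, once the sequence $\bigl(T^{i}(s)\bigr)_{i\in\mathbb{N}_{0}}$ becomes finite it never changes again. In particular, if $\widetilde{T}(s)=n$, then $T^{i}(s)=T^{n}(s)$ for every $i\geq n$, and therefore $\overline{T}(s)=\lim_{i\to\infty}T^{i}(s)=T^{n}(s)$. This already disposes of the equality part of \eqref{eq03036}; what remains is the inequality $T^{n}(s)\leq n$.

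For the inequality I proceed by strong induction on $n$. The base case $n=0$ is immediate: $\widetilde{T}(s)=0$ forces $T^{0}(s)<\infty$, and by the initialization (lines 2--6) this can only happen when $s\in S_{c}$, in which case $T^{0}(s)=0\leq 0$. For the inductive step, fix $n\geq 1$ and suppose the claim holds for all smaller indices. If $\widetilde{T}(s)=n$ then $T^{n-1}(s)=\infty$ and $T^{n}(s)<\infty$, so the $i=n$ pass of the algorithm must have executed either the $s\in S^{1}$ branch or the $s\in S^{2}$ branch. In the first case, $T^{n}(s)=1+\min_{s'\in N(s)}T^{n-1}(s')$, and finiteness of the minimum supplies at least one neighbor $s'$ with $T^{n-1}(s')<\infty$, hence $\widetilde{T}(s')\leq n-1$; the induction hypothesis applied to this $s'$ gives $T^{n-1}(s')=\overline{T}(s')\leq \widetilde{T}(s')\leq n-1$, so $T^{n}(s)\leq n$. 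In the second case, $T^{n}(s)=1+\max_{s'\in N(s)}T^{n-1}(s')$, and finiteness of the maximum forces \emph{every} $s'\in N(s)$ to satisfy $T^{n-1}(s')<\infty$, hence $\widetilde{T}(s')\leq n-1$ for all such $s'$; applying the induction hypothesis to each neighbor individually yields $T^{n-1}(s')\leq n-1$ for all $s'\in N(s)$, so again $T^{n}(s)\leq n$.

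The only real subtlety, and what I would flag as the main point to be careful about, is the asymmetry between the $S^{1}$ and $S^{2}$ cases: for $S^{1}$ a single finite neighbor suffices to explain why $T^{n}(s)$ became finite, while for $S^{2}$ one needs \emph{all} neighbors to have become finite by step $n-1$. Both situations, however, feed cleanly into the induction hypothesis (applied to one or all neighbors, respectively), and in both the ``$+1$'' in the recurrence is absorbed by the strict drop in induction index from $n$ to $n-1$. No appeal to game-theoretic facts is needed; the statement is a structural property of the labeling recursion itself.
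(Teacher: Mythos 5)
Your proof is correct and follows essentially the same route as the paper: a strong induction on $n$ establishing $T^{n}(s)\leq n$, combined with the stabilization property from lines 10--11 to get the equality $T^{i}(s)=\overline{T}(s)$ for $i\geq n$. The only difference is presentational --- the paper handles the $S^{1}$ and $S^{2}$ branches in one stroke by noting that $T^{k+1}(s)=1+T^{k}(s')$ for the optimizing neighbor $s'$, whereas you split the cases explicitly --- but the substance is identical.
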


\begin{proof}
Let us first prove that, for all $s\in\overline{S\noindent}$ and
$n\in\mathbb{N}_{0}$, we have
\begin{equation}
\left(  \widetilde{T}\left(  s\right)  =n\right)  \Rightarrow T^{n}\left(
s\right)  \leq n. \label{eq03033}%
\end{equation}
Clearly (\ref{eq03033}) holds for $n=0$; assume it holds for $n\in\left\{
0,1,...,k\right\}  $. Now, if for some $s$ we have $\widetilde{T}\left(
s\right)  =k+1$ then $T^{k}\left(  s\right)  =\infty$ and $T^{k+1}\left(
s\right)  <\infty$. From this and lines 10-16 of the VL\ algorithm, we see
that
\[
T^{k+1}\left(  s\right)  =1+T^{k}\left(  s^{\prime}\right)
\]
for some $s^{\prime}\in N\left(  s\right)  $ which satisfies $T^{k}\left(
s^{\prime}\right)  =m<\infty$. But then $\widetilde{T}\left(  s^{\prime
}\right)  \leq k$\ which, by the inductive assumption, implies $T^{k}\left(
s^{\prime}\right)  \leq k$. Hence $T^{k+1}\left(  s\right)  \leq k+1$ and we
have proved (\ref{eq03033}) for every $n\in\mathbb{N}_{0}$. Now (\ref{eq03036}%
) follows from lines 10-11 of the VL\ algorithm.
\end{proof}

\begin{proposition}
\label{prop03003}For every $s\in\overline{S}$ we have:%
\begin{equation}
\overline{T}\left(  s\right)  =\widetilde{T}\left(  s\right)  .
\label{eq03005}%
\end{equation}

\end{proposition}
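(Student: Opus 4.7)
The plan is to derive $\overline{T}(s)=\widetilde{T}(s)$ by combining Proposition \ref{prop03002} with a converse inequality proved by induction on $m=\overline{T}(s)$. Proposition \ref{prop03002} already yields one direction, $\overline{T}(s)\leq\widetilde{T}(s)$: if $\widetilde{T}(s)=n$ is finite then the proposition gives $\overline{T}(s)\leq n$, while if $\widetilde{T}(s)=\infty$ then $T^{i}(s)=\infty$ for every $i$ and so $\overline{T}(s)=\infty$ as well. The remaining task is therefore to prove $\widetilde{T}(s)\leq\overline{T}(s)$.

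For this reverse bound the case $\overline{T}(s)=\infty$ is trivial. The base case $m=0$ follows from direct inspection of the VL algorithm: lines 12--16 always produce a value of the form $1+T^{i-1}(s')\geq 1$ whenever they turn an infinite label into a finite one, so $\overline{T}(s)=0$ forces $T^{0}(s)=0$, which by lines 2--5 means $s\in S_{c}$ and hence $\widetilde{T}(s)=0$. For the inductive step, I would assume the equality for every state with $\overline{T}$-value strictly less than $m\geq 1$ and let $s$ satisfy $\overline{T}(s)=m$. Finiteness of $m$ forces $k:=\widetilde{T}(s)$ to be finite, and Proposition \ref{prop03002} gives $T^{k}(s)=m$. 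Reading off line 13 or line 15 at iteration $k$, I obtain a successor $s^{\ast}\in N(s)$ with $T^{k-1}(s^{\ast})=m-1<\infty$; Proposition \ref{prop03002} then identifies $\overline{T}(s^{\ast})=m-1$, and the inductive hypothesis yields $\widetilde{T}(s^{\ast})=m-1$ and therefore $T^{m-1}(s^{\ast})=m-1$.

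The only thing left is to show $T^{m}(s)<\infty$, which finishes the induction because it forces $\widetilde{T}(s)\leq m=\overline{T}(s)$. If $s\in S^{1}$ (minimizer) this is almost immediate: either $T^{m-1}(s)$ is already finite, or line 13 at iteration $m$ gives $T^{m}(s)\leq 1+T^{m-1}(s^{\ast})=m$. The main obstacle is the case $s\in S^{2}$ (maximizer), where the max formula demands that \emph{every} successor have a finite label at step $m-1$, not just the one witnessing the max. To overcome this I would go back to the identity $T^{k}(s)=1+\max_{s'\in N(s)}T^{k-1}(s')=m$, observe that finiteness of this maximum forces $T^{k-1}(s')<\infty$, hence $\overline{T}(s')\leq m-1$, for \emph{every} $s'\in N(s)$, and then apply the inductive hypothesis to each such $s'$ to conclude $\widetilde{T}(s')\leq m-1$ and therefore $T^{m-1}(s')<\infty$. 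With all successor labels finite at step $m-1$, line 15 delivers $T^{m}(s)\leq 1+(m-1)=m$, completing the induction and the proof.
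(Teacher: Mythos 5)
Your proof is correct. It takes a mildly but genuinely different route from the paper's: the paper proves the equality directly by induction on $n=\widetilde{T}(s)$ (the first iteration at which the label becomes finite), showing $\widetilde{T}(s)=n\Rightarrow\overline{T}(s)=T^{n}(s)=n$, whereas you split the equality into two inequalities, get $\overline{T}(s)\leq\widetilde{T}(s)$ for free from Proposition \ref{prop03002}, and prove $\widetilde{T}(s)\leq\overline{T}(s)$ by induction on $m=\overline{T}(s)$ (the value of the limit label). The technical heart is the same in both arguments --- in particular, both must confront the maximizer case, where one needs \emph{all} successors, not just the arg-max witness, to have finite labels early enough; you resolve this exactly as the paper does, by applying the inductive hypothesis to every successor after noting that finiteness of the max forces every $T^{k-1}(s')$ to be finite. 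What your organization buys is a cleaner logical separation (one inequality is pure bookkeeping from Proposition \ref{prop03002}, the other is the substantive induction) at the cost of leaning on Proposition \ref{prop03002} more heavily and of carrying two indices ($k=\widetilde{T}(s)$ and $m=\overline{T}(s)$) through the inductive step; the paper's single induction on $\widetilde{T}$ keeps only one index but must establish the full chain $\overline{T}(s)=T^{n}(s)=\cdots=n$ in one pass. Your base case ($\overline{T}(s)=0$ forces $s\in S_{c}$ because lines 12--16 only ever produce labels $\geq 1$) and your handling of the infinite cases are both sound.
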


\begin{proof}
We partition $\overline{S}$ into the following two sets%
\[
S_{a}=\left\{  s:\widetilde{T}\left(  s\right)  =\infty\right\}  ,\qquad
S_{b}=\left\{  s:\widetilde{T}\left(  s\right)  <\infty\right\}  .
\]
If $s\in S_{a}$, then $\widetilde{T}(s)=\infty$; then%
\[
\{i:T^{i}\left(  s\right)  <\infty\}=\emptyset\Rightarrow\left(  \forall
i:T^{i}\left(  s\right)  =\infty\right)  ;
\]
hence $\overline{T}(s)=\infty$. We conclude that
\begin{equation}
\forall s\in S_{a}:\text{ }\overline{T}\left(  s\right)  =\widetilde{T}\left(
s\right)  . \label{eq03031}%
\end{equation}
To complete the proof of (\ref{eq03005}), we must prove
\begin{equation}
\forall s\in S_{b}:\text{ }\overline{T}\left(  s\right)  =\widetilde{T}\left(
s\right)  . \label{eq03032}%
\end{equation}
To this end we will show that, for all $n\in\mathbb{N}_{0}$, we have%
\begin{equation}
\forall s\in S_{b}:\text{ }\widetilde{T}\left(  s\right)  =n\Rightarrow
\overline{T}\left(  s\right)  =T^{n}\left(  s\right)  =T^{n+1}\left(
s\right)  =...=n. \label{eq03034}%
\end{equation}
Now, (\ref{eq03034}) clearly holds for $n=0$. Suppose it holds for
$n\in\left\{  0,1,...,k\right\}  $. Take any state $s\in S_{b}$ such that
$\widetilde{T}\left(  s\right)  =k+1$. Then we have%
\begin{align}
T^{k}\left(  s\right)   &  =\infty,\text{ }\label{eq03035}\\
T^{k+1}\left(  s\right)   &  =T^{k+2}\left(  s\right)  =...=\overline
{T}\left(  s\right)  =m\leq k+1<\infty. \label{eq03035a}%
\end{align}

\begin{enumerate}
\item If $s\in S^{1}$, (\ref{eq03035})-(\ref{eq03035a}) imply that there
exists some $s^{\prime}\in N\left(  s\right)  $ such that%
\[
T^{k}\left(  s^{\prime}\right)  =\min_{u\in N\left(  s\right)  }T^{k}\left(
u\right)  \qquad\text{and\qquad}T^{k+1}\left(  s\right)  =1+T^{k}\left(
s^{\prime}\right)  .
\]
Also (\ref{eq03035})$\ $implies that $\min_{u\in N\left(  s\right)  }%
T^{k-1}\left(  u\right)  =\infty$ and hence
\begin{equation}
T^{k-1}\left(  s^{\prime}\right)  =\infty. \label{eq03035b}%
\end{equation}
But $T^{k}\left(  s^{\prime}\right)  =T^{k+1}\left(  s\right)  -1<\infty$;
hence $\widetilde{T}\left(  s^{\prime}\right)  =k$ and, using the inductive
assumption, we have
\begin{align*}
\overline{T}\left(  s^{\prime}\right)   &  =T^{k}\left(  s^{\prime}\right)
=T^{k+1}\left(  s^{\prime}\right)  =...=k\Rightarrow\\
\overline{T}\left(  s\right)   &  =T^{k+1}\left(  s\right)  =T^{k+2}\left(
s\right)  =...=1+T^{k}\left(  s^{\prime}\right)  =1+k.
\end{align*}

\item If $s\in S^{2}$, (\ref{eq03035a}) implies that $\max_{u\in N\left(
s\right)  }T^{k}\left(  u\right)  <\infty$; hence for all $u\in N\left(
s\right)  $ we have $T^{k}\left(  u\right)  <\infty$. On the other hand,
(\ref{eq03035}) implies that there exists some $s^{\prime}\in N\left(
s\right)  $ such that $T^{k-1}\left(  s^{\prime}\right)  =\infty$. Hence
$\widetilde{T}\left(  s^{\prime}\right)  =k$. We also have
\begin{align*}
T^{k+1}\left(  s\right)  =m<\infty &  \Rightarrow\left(  \forall u\in N\left(
s\right)  :T^{k}\left(  u\right)  <\infty\right) \\
&  \Rightarrow\left(  \forall u\in N\left(  s\right)  :\widetilde{T}\left(
u\right)  \leq k\right) \\
&  \Rightarrow\left(  \forall u\in N\left(  s\right)  :\overline{T}\left(
u\right)  =T^{k}\left(  u\right)  \leq k\right)
\end{align*}
(the last line following from Proposition \ref{prop03002}). Hence $s^{\prime}$
achieves the maximum:
\[
T^{k}\left(  s^{\prime}\right)  =\max_{u\in N\left(  s\right)  }T^{k}\left(
u\right)  =k
\]
and so
\[
\overline{T}\left(  s\right)  =T^{k+1}\left(  s\right)  =T^{k+2}\left(
s\right)  =...=1+T^{k}\left(  s^{\prime}\right)  =1+k.
\]

\end{enumerate}

\noindent Thus we have proved that (\ref{eq03034}) holds for every
$n\in\mathbb{N}_{0}$, and we are done.
\end{proof}

\bigskip

\bigskip

\noindent We also have the following useful and easily provable proposition.

\begin{proposition}
\label{prop03004}The collection $\left(  \overline{T}\left(  s\right)
\right)  _{s\in\overline{S}}$ satisfies the following:%
\begin{align}
\forall s  &  \in S^{1}\backslash S_{c}:\overline{T}\left(  s\right)
=1+\min_{s^{\prime}\in\mathbf{N}(s)}\overline{T}\left(  s^{\prime}\right)
,\label{eq03002}\\
\forall s  &  \in S^{2}\backslash S_{c}:\overline{T}\left(  s\right)
=1+\max_{s^{\prime}\in\mathbf{N}(s)}\overline{T}\left(  s^{\prime}\right)  ,
\label{eq03003}%
\end{align}

\end{proposition}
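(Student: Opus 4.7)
The plan is to prove each of the two recursions by fixing $s \in S^1 \setminus S_c$ (resp.\ $s \in S^2 \setminus S_c$), splitting on whether $\overline{T}(s)$ is finite or infinite, and in each case reading off the claim from the corresponding update line of the VL algorithm, using Proposition \ref{prop03003} to translate between $\overline{T}$ and $\widetilde{T}$. The $S^1$ case gives the minimum; the $S^2$ case is entirely symmetric, with line 14 replacing line 12 in the algorithm and max replacing min throughout. So I will only sketch the $S^1$ case.

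First, suppose $s \in S^1 \setminus S_c$ with $\overline{T}(s) = n < \infty$. By Proposition \ref{prop03003}, $\widetilde{T}(s) = n$, and since $s \notin S_c$ we have $n \geq 1$, so $T^{n-1}(s) = \infty$ and $T^n(s) < \infty$. Lines 10--13 of the VL algorithm then force
\[
T^n(s) = 1 + \min_{u \in N(s)} T^{n-1}(u),
\]
and since $T^n(s) = \overline{T}(s) = n$, there exists $s' \in N(s)$ with $T^{n-1}(s') = n-1$, and every $u \in N(s)$ satisfies $T^{n-1}(u) \geq n-1$. For such $s'$, Proposition \ref{prop03002} combined with the stability of finite labels (line 11) gives $\widetilde{T}(s') = n-1$, hence $\overline{T}(s') = n-1$ by Proposition \ref{prop03003}. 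For an arbitrary $u \in N(s)$, either $T^{n-1}(u) = \infty$, in which case $\widetilde{T}(u) \geq n$ and so $\overline{T}(u) \geq n > n-1$, or $T^{n-1}(u) < \infty$, in which case once again Proposition \ref{prop03003} and the stability of finite labels yield $\overline{T}(u) = T^{n-1}(u) \geq n-1$. Thus $\min_{u \in N(s)} \overline{T}(u) = n-1$ and the recursion (\ref{eq03002}) holds.

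Now suppose $s \in S^1 \setminus S_c$ with $\overline{T}(s) = \infty$, i.e., $T^i(s) = \infty$ for every $i$. I claim every $u \in N(s)$ also has $\overline{T}(u) = \infty$, so that $\min_{u \in N(s)} \overline{T}(u) = \infty$ and (\ref{eq03002}) again holds (with both sides infinite). Suppose for contradiction that some $u_0 \in N(s)$ has $\overline{T}(u_0) = m < \infty$. By Proposition \ref{prop03003} and Proposition \ref{prop03002}, $T^m(u_0) \leq m < \infty$. But then, applying lines 12--13 of the algorithm at iteration $m+1$,
\[
T^{m+1}(s) \leq 1 + T^m(u_0) \leq m+1 < \infty,
\]
contradicting $T^{m+1}(s) = \infty$. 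This establishes the infinite case and completes the proof of (\ref{eq03002}); the argument for (\ref{eq03003}), using line 14 in place of line 12, is identical with min and max interchanged.

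The only slightly delicate step is the finite case of the $S^1$ subproof, where one must argue simultaneously that some $s'$ realizes the minimum with $\overline{T}(s') = n-1$ and that no neighbor beats this value; both halves rest on the stability of finite labels (line 11) together with Propositions \ref{prop03002}--\ref{prop03003}, so no new combinatorial input is required.
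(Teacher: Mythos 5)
Your proof is correct, and it is essentially a rigorous elaboration of the paper's own argument, which consists of the single sentence ``take the limits in lines 13 and 15 of the VL algorithm.'' That one-liner is glib: line 13 (resp.\ 15) only governs the update while $T^{i-1}(s)=\infty$, after which line 11 takes over, so ``taking limits'' really requires the stabilization facts of Propositions \ref{prop03002}--\ref{prop03003} plus an exchange of limit with $\min/\max$ over the finite set $N(s)$ --- precisely the details your case analysis supplies. What your version buys is a self-contained verification; what it costs is length. One small caveat: the infinite case of (\ref{eq03003}) is not literally ``identical with min and max interchanged.'' For $s\in S^{1}$ you assume a \emph{single} neighbor $u_{0}$ has finite label and derive a contradiction; for $s\in S^{2}$ you must instead assume \emph{all} neighbors have finite labels, set $m=\max_{u\in N(s)}\overline{T}(u)$, and use the finiteness of $N(s)$ to conclude $m<\infty$ before running the contradiction $T^{m+1}(s)\leq 1+m<\infty$. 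This is easily repaired and does not affect correctness, but the quantifier flip deserves a sentence rather than an appeal to symmetry.
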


\begin{proof}
Simply take the limits in lines 13 and 15 of the VL\ algorithm.
\end{proof}

\bigskip

\noindent The system of equations (\ref{eq03002})-(\ref{eq03003}) is the
perfect information version of the \emph{optimality equations} which play a
central role in stochastic games \cite{filar1996}.

\bigskip

\bigskip

\noindent We will next show that, for every $s\in\overline{S}$, \ $\overline
{T}\left(  s\right)  $ is the \emph{value} of the game $\left(  \mathbf{N}%
,S^{1},S^{2},S_{c},s\right)  $. Furthermore, we will use the collection
$\left(  \overline{T}\left(  s\right)  \right)  _{s\in\overline{S}}$ to define
\emph{optimal} strategies for $P^{1}$ and $P^{2}$.

To avoid ambiguities in the definition of the optimal strategies we modify the
functions $\arg\min$ and $\arg\max$ as follows. Given the collection of
successor states $\mathbf{N}=\left(  N\left(  s\right)  \right)
_{s\in\overline{S}}$, we assume that $\overline{S}$ is equipped with a fixed
total order. Then, for every $s\in\overline{S}$, expressions such as
\textquotedblleft the \emph{first} element of $N(s)$ such that
...\textquotedblright\ are well (i.e., uniquely)  defined. Now, for every
function $f:\overline{S}\rightarrow\mathbb{R}\cup\left\{  \infty\right\}  $,
we (re)define $\arg\min$ and $\arg\max$:
\begin{align*}
\forall s &  \in\overline{S}:\arg\min_{s^{\prime}\in N(s)}f\left(  s^{\prime
}\right)  =\text{\textquotedblleft the \emph{first} element }u\in N(s)\text{
s.t. }f\left(  u\right)  =\min_{s^{\prime}\in N(s)}f\left(  s^{\prime}\right)
\text{\textquotedblright,}\\
\forall s &  \in\overline{S}:\arg\max_{s^{\prime}\in N(s)}f\left(  s^{\prime
}\right)  =\text{\textquotedblleft the \emph{first} element }u\in N(s)\text{
s.t. }f\left(  u\right)  =\max_{s^{\prime}\in N(s)}f\left(  s^{\prime}\right)
\text{\textquotedblright.}%
\end{align*}

\begin{definition}
\label{prop03005}Given the game family $\left(  \mathbf{N},S^{1},S^{2}%
,S_{c}\right)  $, with labels $\left(  \overline{T}\left(  s\right)  \right)
_{s\in S_{c}}$, we define the following two strategies%
\begin{align*}
\text{A Pursuer strategy }\widehat{\sigma}^{1} &  :\forall s\in S^{1}%
\backslash S_{c}:\widehat{\sigma}^{1}\left(  s\right)  =\arg\min{}_{s^{\prime
}\in N(s)}\overline{T}\left(  s^{\prime}\right)  ,\\
\text{An Evader strategy }\widehat{\sigma}^{2} &  :\forall s\in S^{2}%
\backslash S_{c}:\widehat{\sigma}^{2}\left(  s\right)  =\arg\max{}_{s^{\prime
}\in N(s)}\overline{T}\left(  s^{\prime}\right)  .
\end{align*}

\end{definition}

\begin{proposition}
\label{prop03006}Given the game family $\left(  \mathbf{N},S^{1},S^{2}%
,S_{c}\right)  $, for every $s\in\overline{S}$: $\overline{T}(s)$ is the value
of the game $\left(  \mathbf{N},S^{1},S^{2},S_{c},s\right)  $, and
$\widehat{\sigma}^{1},\widehat{\sigma}^{2}$ are optimal positional strategies.
\end{proposition}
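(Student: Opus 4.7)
The plan is to verify the two sufficient conditions in Proposition \ref{prop02017}: that for every $s \in \overline{S}$, (A) $T(\widehat{\sigma}^1, \sigma^2 | s) \leq \overline{T}(s)$ for all Evader strategies $\sigma^2$, and (B) $T(\sigma^1, \widehat{\sigma}^2 | s) \geq \overline{T}(s)$ for all Pursuer strategies $\sigma^1$. Both $\widehat{\sigma}^1$ and $\widehat{\sigma}^2$ are positional and well-defined on $\overline{S} \setminus S_c$ thanks to the fixed total order on $\overline{S}$; on $S_c$ the game has already terminated so no further choices matter.

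The core of the argument is Claim (A), which I would prove by strong induction on $n = \overline{T}(s) \in \mathbb{N}_0$ (the case $n = \infty$ being trivial since capture time is bounded above by $\infty$). The base case $n = 0$ is $s \in S_c$, which gives $T(\cdot,\cdot|s) = 0$. For the inductive step with $n \geq 1$: if $s \in S^1 \setminus S_c$, then by Definition \ref{prop03005} and Proposition \ref{prop03004}, $\widehat{\sigma}^1(s) = s'$ with $\overline{T}(s') = n - 1$; the game transitions deterministically to $s'$ and the tail of the play is governed by $\widehat{\sigma}^1$ and the shifted strategy $\sigma^2_s(h) = \sigma^2(s \cdot h)$, so the inductive hypothesis at $s'$ yields a tail capture time at most $n - 1$, whence $T(\widehat{\sigma}^1, \sigma^2 | s) \leq 1 + (n-1) = n$. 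If $s \in S^2 \setminus S_c$, then whatever $s' = \sigma^2(s) \in N(s)$ the opponent picks, Proposition \ref{prop03004} forces $\overline{T}(s') \leq n - 1$, and the inductive hypothesis applies identically.

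Claim (B) splits into two cases. For $\overline{T}(s) < \infty$ the argument is dual to (A): induction on $n = \overline{T}(s)$, using that on an $S^2$-state $\widehat{\sigma}^2$ picks a successor of label exactly $n - 1$, while on an $S^1$-state every available move by $\sigma^1$ lands on a successor of label at least $n - 1$ (by the min-formula in Proposition \ref{prop03004}). For $\overline{T}(s) = \infty$ the bound $T(\sigma^1, \widehat{\sigma}^2 | s) \geq \infty$ must be proved directly. Proposition \ref{prop03004} implies that if $s \in S^1 \setminus S_c$ has $\overline{T}(s) = \infty$ then every successor in $N(s)$ has infinite label, and if $s \in S^2 \setminus S_c$ has $\overline{T}(s) = \infty$ then $\widehat{\sigma}^2(s)$ is itself a successor of infinite label. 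Since $\overline{T}$ vanishes on $S_c$, no state of infinite label is a capture state, and a straightforward induction on the turn index shows that under $\widehat{\sigma}^2$ the entire history consists of infinite-label states, so no capture ever occurs and $T(\sigma^1, \widehat{\sigma}^2 | s) = \infty$.

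The main subtlety to handle carefully is the bookkeeping surrounding nonpositional opponent strategies: when invoking the inductive hypothesis at a successor state $s'$, one must pass to the shifted strategy $\sigma^{-n}_s$ and observe that the single transition turn contributes exactly $1$ to the capture time, which added to the inductive bound at $s'$ yields the desired bound at $s$. This is routine but should be spelled out explicitly. Once (A) and (B) are established, Proposition \ref{prop02017} immediately delivers both assertions: $\overline{T}(s)$ is the value of $(\mathbf{N}, S^1, S^2, S_c, s)$ and $\widehat{\sigma}^1, \widehat{\sigma}^2$ are optimal positional strategies.
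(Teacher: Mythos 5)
Your proposal is correct and follows essentially the same route as the paper's proof: reduce to the two inequalities of Proposition \ref{prop02017}, prove them by induction on $n=\overline{T}(s)$ in the finite-label case using the optimality equations of Proposition \ref{prop03004}, and handle the infinite-label case by showing the play under $\widehat{\sigma}^{2}$ never leaves the set of infinite-label states (the paper phrases this last step as a contradiction via the first finite-label state, you as a direct induction on the turn index, but the content is identical). Your explicit attention to the shifted-strategy bookkeeping for nonpositional opponents is a small refinement of a point the paper treats more informally.
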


\begin{proof}
Assuming $\left(  \mathbf{N},S^{1},S^{2},S_{c}\right)  $ given and fixed, we
drop it from all subsequent notation.

It follows immediately from Definition \ref{prop03005} that $\widehat{\sigma
}^{1},\widehat{\sigma}^{2}$ are positional strategies. Given Proposition
\ref{prop02017}, to prove the rest of the theorem it suffices to show that
\begin{align}
\forall s  &  \in S:\forall\sigma^{2}:\text{ }T\left(  \widehat{\sigma}%
^{1},\sigma^{2}|s\right)  \leq\overline{T}(s),\label{eqG001}\\
\forall s  &  \in S:\forall\sigma^{1}:\text{ }T\left(  \sigma^{1}%
,\widehat{\sigma}^{2}|s\right)  \geq\overline{T}(s). \label{eqG002}%
\end{align}
The rest of the proof is divided into two parts.

\medskip

\medskip

\noindent\underline{\textbf{Part I}}. Take any $s$ such that $\overline
{T}(s)<\infty$. \ 

\begin{enumerate}
\item First we show that $\widehat{\sigma}^{1}$ satisfies (\ref{eqG001}), by
showing that, for every $n\in\mathbb{N}_{0}$ we have:%
\begin{equation}
\forall s:\overline{T}(s)=n\Rightarrow\left(  \forall\sigma^{2}:T\left(
\widehat{\sigma}^{1},\sigma^{2}|s\right)  \leq\overline{T}(s)\right)  .
\label{eq03019}%
\end{equation}
Obviously (\ref{eq03019}) holds when $\overline{T}(s)=0$ (because then $s\in
S_{c}$ and $T\left(  \sigma^{1},\sigma^{2}|s\right)  =0$ for all $\sigma
^{1},\sigma^{2}$). Assume it holds for all $n\in\{0,1,...,k\}$ and pick any
$s$ such that $\overline{T}(s)=k+1$.

\begin{enumerate}
\item Suppose $s\in S^{1}$. \ From (\ref{eq03002}) we have%
\[
k+1=1+\min_{s^{\prime}\in N\left(  s\right)  }\overline{T}(s^{\prime
})\Rightarrow\min_{s^{\prime}\in N\left(  s\right)  }\overline{T}(s^{\prime
})=k\text{.}%
\]
Hence, for $\overline{s}=\widehat{\sigma}^{1}\left(  s\right)  =\arg
\min_{s^{\prime}\in N(s)}\overline{T}\left(  s^{\prime}\right)  $ we have
$\ \overline{T}\left(  \overline{s}\right)  =k$. By the inductive assumption,
we then have%
\[
\forall\sigma^{2}:\text{ }T\left(  \widehat{\sigma}^{1},\sigma^{2}%
|\overline{s}\right)  \leq\overline{T}(\overline{s})=k\text{.}%
\]
In other words, if $P^{1}$ uses $\widehat{\sigma}^{1}$ in the game $\left(
\mathbf{N},S^{1},S^{2},S_{c},\overline{s}\right)  $ then he will achieve
capture in at most $k$ moves, no matter how $P^{2}$ plays. Since $\overline
{s}\in N(s)$, $P^{1}$ can also use $\widehat{\sigma}^{1}$ in the game $\left(
\mathbf{N},S^{1},S^{2},S_{c},s\right)  $. This means he will move from $s$ to
$\overline{s}$ and then will play exactly as in game $\left(  \mathbf{N}%
,S^{1},S^{2},S_{c},\overline{s}\right)  $. This ensures capture in at most
$k+1$ moves, no matter how $P^{2}$ plays, i.e.,%
\[
\forall\sigma^{2}:\text{ }T\left(  \widehat{\sigma}^{1},\sigma^{2}|s\right)
\leq k+1\text{.}%
\]
Therefore $\widehat{\sigma}^{1}$ satisfies (\ref{eqG001}) for all $s\in S^{1}$
with $\overline{T}(s)=k+1$.

\item Suppose $s\in S^{2}$. From relation (\ref{eq03003}) we have%
\[
k+1=1+\max_{s^{\prime}\in N\left(  s\right)  }\overline{T}(s^{\prime
})\Rightarrow\max_{s^{\prime}\in N\left(  s\right)  )}\overline{T}(s^{\prime
})=k\text{.}%
\]
Thus, whatever the initial move by player $P^{2}$ in game $\left(
\mathbf{N},S^{1},S^{2},S_{c},s\right)  $, the resulting state $\overline{s}%
\ $will satisfy $\overline{T}\left(  \overline{s}\right)  \leq k$. Again, by
the inductive assumption, we have%
\[
\forall\sigma^{2}:\text{ }T\left(  \widehat{\sigma}^{1},\sigma^{2}%
|\overline{s}\right)  \leq\overline{T}(\overline{s})\leq k
\]
and hence, in the game $\left(  \mathbf{N},S^{1},S^{2},S_{c},\overline
{s}\right)  $, $P^{1}$ can use $\widehat{\sigma}^{1}$ and capture in at most
$k$ moves (no matter what $P^{2}$ plays). We then get, by the same reasoning
as above, that%
\[
\forall\sigma^{2}:\text{ }T\left(  \widehat{\sigma}^{1},\sigma^{2}|s\right)
\leq k+1\text{.}%
\]

Therefore $\widehat{\sigma}^{1}$ satisfies (\ref{eqG001}) for all $s\in S^{2}$
with $\overline{T}(s)=k+1$.
\end{enumerate}

Hence we have shown that $\widehat{\sigma}^{1}$ satisfies (\ref{eqG001}),

\item Next we show inductively that $\widehat{\sigma}^{2}$ satisfies
(\ref{eqG002}). I.e., we show that: for every $n\in\mathbb{N}_{0}$ we have:%
\begin{equation}
\forall s:\overline{T}(s)=n\Rightarrow\left(  \forall\sigma^{1}:T\left(
\sigma^{1},\widehat{\sigma}^{2}|s\right)  \geq\overline{T}(s)\right)  .
\label{eq03020}%
\end{equation}
Obviously (\ref{eq03020})\ holds when $\overline{T}(s)=0$. Assume it holds for
all $n\in\{0,1,...,k\}$ and pick any $s$ such that $\overline{T}(s)=k+1$.

\begin{enumerate}
\item Suppose $s\in S^{2}$. Similarly to the previous case, from
(\ref{eq03003}) we get $\max_{s^{\prime}\in N\left(  s\right)  }\overline
{T}(s^{\prime})=k$. Hence for $\overline{s}=\widehat{\sigma}^{2}\left(
s\right)  =\arg\max_{s^{\prime}\in N\left(  s\right)  }\overline{T}(s^{\prime
})$ we have $\overline{T}(\overline{s})=k$. By the inductive assumption we
then have
\[
\forall\sigma^{1}:T\left(  \sigma^{1},\widehat{\sigma}^{2}|\overline
{s}\right)  \geq\overline{T}(\overline{s})=k\text{.}%
\]
Hence $P^{2}$ using $\widehat{\sigma}^{2}$ in the game $\left(  \mathbf{N}%
,S^{1},S^{2},S_{c},\overline{s}\right)  $ can ensure capture will take $k$
turns or more, no matter how $P^{1}$ plays. Reasoning as previously, $P^{2}$
using $\widehat{\sigma}^{2}$ in $\left(  \mathbf{N},S^{1},S^{2},S_{c}%
,s\right)  $ can ensure capture will take $k+1$ turns or more, no matter how
$P^{1}$ plays, i.e.,%
\[
\forall\sigma^{1}:T\left(  \sigma^{1},\widehat{\sigma}^{2}|s\right)  \geq
k+1.
\]
Therefore $\widehat{\sigma}^{2}$ satisfies (\ref{eqG002}) for all $s\in S^{2}$
with $\overline{T}(s)=k+1$.

\item Finally, suppose $s\in S^{1}$; by an argument similar to that of case
1.b above we get that%
\[
\forall\sigma^{1}:T\left(  \sigma^{1},\widehat{\sigma}^{2}|s\right)  \geq
k+1.
\]
Therefore $\widehat{\sigma}^{2}$ satisfies (\ref{eqG002}) for all $s\in S^{1}$
with $\overline{T}(s)=k+1$.
\end{enumerate}

Hence we have shown that $\widehat{\sigma}^{2}$ satisfies (\ref{eqG002}).
\end{enumerate}

We have completed the proof of (\ref{eqG001})-(\ref{eqG002}) for all $s$ such
that $\overline{T}(s)<\infty$.

\medskip

\medskip

\noindent\underline{\textbf{Part II}}. Now take any state $s$ such that
$\overline{T}(s)=\infty$. Then we clearly have
\[
\forall s\in S:\forall\sigma^{2}:\text{ }T\left(  \widehat{\sigma}^{1}%
,\sigma^{2}|s\right)  \leq\overline{T}(s)=\infty;
\]
which proves (\ref{eqG001}). \ To prove (\ref{eqG002}), first set $s_{0}=s$
and then, for any Pursuer strategy $\sigma^{1}$, let%
\[
s_{0}s_{1}s_{2}...=H\left(  \sigma^{1},\widehat{\sigma}^{2}|s_{0}\right)  .
\]
We want to show that $s_{0}s_{1}s_{2}...$ never reaches a capture state. We
will actually prove something (apparently) stronger:%
\begin{equation}
\forall t\in\mathbb{N}_{0},s_{t}\in H(\sigma^{1},\widehat{\sigma}^{2}%
|s_{0}):\overline{T}\left(  s_{t}\right)  =\infty\text{.} \label{eq03037}%
\end{equation}
Suppose (\ref{eq03037}) is false and let $s_{k+1}$ be the first element of
$s_{0}s_{1}s_{2}...$ with $\overline{T}\left(  s_{t}\right)  <\infty$. I.e.,
\begin{equation}
\forall t\leq k:\overline{T}\left(  s_{t}\right)  =\infty\text{ and }%
\overline{T}\left(  s_{k+1}\right)  =m<\infty\text{.} \label{eq03038}%
\end{equation}
If $s_{k}\in S^{2}$, then $s_{k+1}=\widehat{\sigma}^{2}\left(  s_{k}\right)
=\arg\max_{s^{\prime}\in N\left(  s_{k}\right)  }\overline{T}\left(
s^{\prime}\right)  $ and
\begin{equation}
\infty=\overline{T}\left(  s_{k}\right)  =1+\overline{T}\left(  s_{k+1}%
\right)  \Rightarrow\overline{T}\left(  s_{k+1}\right)  =\infty.
\label{eq03039}%
\end{equation}
If $s_{k}\in S^{1}$, then clearly (for any $\sigma^{1}$)$\ s_{k+1}\in N\left(
s_{k}\right)  $. Since $\overline{T}\left(  s_{k+1}\right)  =m<\infty$ we will
have
\begin{equation}
\overline{T}\left(  s_{k}\right)  =1+\min_{s^{\prime}\in N\left(
s_{k}\right)  }\overline{T}\left(  s^{\prime}\right)  \leq1+\overline
{T}\left(  s_{k+1}\right)  =1+m<\infty. \label{eq03040}%
\end{equation}
In both (\ref{eq03039}) and (\ref{eq03040})\ we have a contradiction. \ We
conclude that we cannot have $\overline{T}\left(  s_{t}\right)  <\infty$ for
any $t$; consequently (\ref{eq03037}) is true. Hence we have completed the
proof of (\ref{eqG001})-(\ref{eqG002}) for all $s$ such that $\overline
{T}(s)=\infty$.
\end{proof}

\begin{remark}
\normalfont It is worth noting that the VL\ algorithm implements both
\emph{Backward Induction} and \emph{Value Iteration}.
\end{remark}

\subsection{An Alternative Solution\label{sec0302}}

We will now present an alternative approach to the solution of the
GCR$\ $game. While this approach reaches the same conclusions as the VL-based
approach of Section \ref{sec0301}, it underscores certain aspects of the
GCR$\ $game which are not obvious in the VL-based approach. On the other hand,
the new approach is not totally self-contained; in particular it invokes Von
Neumann's famous MinMax Theorem \cite{Thomas}.

There is another sense in which the approach of this section \emph{appears}
weaker than the VL-based one. Namely, \emph{we will henceforth assume here
that the two players move alternately}, i.e., that every move by $P^{n}$ is
followed a move by $P^{-n}$. However this assumption is introduced only for
clarity of presentation; the proofs can be modified so that they hold when the
alternate-moves assumption is removed.

We start by introducing the concepts of \emph{Pursuer-win} and
\emph{Evader-win} games, which are generalizations of the well known concepts
of cop-win and robber-win graphs.

\begin{definition}
\label{prop03007}The game $\left(  \mathbf{N},S^{1},S^{2},S_{c},s\right)  $ is
called \emph{Pursuer-win} (\emph{P-win}) iff $P^{1}$ has \ a strategy
$\overline{\sigma}^{1}$ which effects capture for every Evader strategy
$\sigma^{2}$; $\left(  \mathbf{N},S^{1},S^{2},S_{c},s\right)  $ is called
\emph{Evader-win} (\emph{E-win}) iff $P^{2}$ has \ a strategy $\overline
{\sigma}^{2}$ which avoids capture for every Pursuer strategy $\sigma^{1}$. In
other words%
\begin{align}
\left(  \mathbf{N},S^{1},S^{2},S_{c},s\right)  \text{ is \emph{P-win} iff}  &
:\exists\overline{\sigma}^{1}:\forall\sigma^{2}:T\left(  \overline{\sigma}%
^{1},\sigma^{2}|s\right)  <\infty,\label{eq05001}\\
\left(  \mathbf{N},S^{1},S^{2},S_{c},s\right)  \text{ is \emph{E-win} iff}  &
:\exists\overline{\sigma}^{2}:\forall\sigma^{1}:T\left(  \sigma^{1}%
,\overline{\sigma}^{2}|s\right)  =\infty\label{eq05002}%
\end{align}

\end{definition}

\noindent A point that is not often stressed in the discussion of cop-win and
robber-win graphs is that we cannot \emph{automatically} conclude that a graph
is either cop-win or robber-win. More generally, we cannot automatically
conclude that the game $\left(  \mathbf{N},S^{1},S^{2},S_{c},s\right)  $ is
either P-win or E-win, because the opposite of (\ref{eq05001}) is%
\begin{equation}
\forall\sigma^{1}:\exists\sigma_{\sigma^{1}}^{2}:T\left(  \sigma^{1}%
,\sigma_{\sigma^{1}}^{2}|s\right)  =\infty\label{eq05003}%
\end{equation}
(i.e., the Evader strategy $\sigma_{\sigma^{1}}^{2}$ which ensures no capture
takes place will in general depend on the Pursuer strategy $\sigma^{1}$); and
(\ref{eq05003}) is not equivalent to (\ref{eq05002}). However, we can
\emph{prove} that, indeed, any $\left(  \mathbf{N},S^{1},S^{2},S_{c},s\right)
$ is either E-win or P-win.

\begin{proposition}
\label{prop03008}Every $\left(  \mathbf{N},S^{1},S^{2},S_{c},s\right)  $ is
either P-win or E-win.
\end{proposition}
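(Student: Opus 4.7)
The plan is to construct, by an inductive \emph{attractor} argument, an explicit set $W \subseteq \overline{S}$ of Pursuer-winning states and then to exhibit positional strategies witnessing each side of the dichotomy. Define
\[
W_0 := S_c, \qquad W_{k+1} := W_k \cup \{ s \in S^1 : N(s) \cap W_k \neq \emptyset \} \cup \{ s \in S^2 : N(s) \subseteq W_k \},
\]
and let $W := \bigcup_{k \geq 0} W_k$; this union stabilizes after finitely many steps since $\overline{S}$ is finite. I claim that $s \in W$ implies the game is P-win and $s \notin W$ implies it is E-win; since $\{ W, \overline{S} \setminus W \}$ partitions $\overline{S}$, this yields the dichotomy.

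For $s \in W$, set $\ell(s) := \min\{ k : s \in W_k \}$. I would define a positional Pursuer strategy $\overline{\sigma}^1$ by picking, for each $s \in S^1 \cap (W_{k+1} \setminus W_k)$, some element of $N(s) \cap W_k$, which exists by the definition of $W_{k+1}$. A short induction on $\ell(s)$ shows that, under $\overline{\sigma}^1$, the level strictly decreases after each move: at a $P^1$-state this is forced by $\overline{\sigma}^1$, and at a $P^2$-state of level $\ell$ the definition of $W_\ell$ guarantees that every available $P^2$-move leads into $W_{\ell - 1}$. Hence $T(\overline{\sigma}^1, \sigma^2 | s) \leq \ell(s) < \infty$ for every $\sigma^2$, so $s$ is P-win.

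For $s \notin W$, I would define a positional Evader strategy $\overline{\sigma}^2$ by selecting, for each $s \in S^2 \setminus W$, some $\overline{\sigma}^2(s) \in N(s) \setminus W$. Such a successor must exist: if every element of $N(s)$ lay in $W$ then, by finiteness of $\overline{S}$, one would have $N(s) \subseteq W_k$ for some $k$ and therefore $s \in W_{k+1} \subseteq W$, contradicting $s \notin W$. The main obstacle is to verify that, under $\overline{\sigma}^2$, every state visited remains in $\overline{S} \setminus W$ regardless of the Pursuer's (possibly nonpositional) strategy. At Evader-states this is immediate by construction; at a Pursuer-state $s' \in S^1 \setminus W$, every successor $s''$ must satisfy $s'' \notin W$, since $s'' \in W_k$ would force $s' \in W_{k+1} \subseteq W$. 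Because $S_c \subseteq W$, the play never reaches a capture state, so $T(\sigma^1, \overline{\sigma}^2 | s) = \infty$ for every $\sigma^1$, and $s$ is E-win. This argument precisely resolves the quantifier-inversion issue flagged before the statement: a single Evader strategy $\overline{\sigma}^2$ defeats every Pursuer strategy simultaneously, rather than merely one Evader strategy per Pursuer strategy.
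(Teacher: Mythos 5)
Your proof is correct, but it takes a genuinely different route from the paper's. The paper argues by contraposition in a Zermelo-style fashion: starting from the negation of P-win (the statement $\forall\sigma^{1}\,\exists\sigma^{2}_{\sigma^{1}}:T(\sigma^{1},\sigma^{2}_{\sigma^{1}}|s)=\infty$), it stitches together, history by history, a single (in general nonpositional) Evader strategy $\overline{\sigma}^{2}$ that always moves to a state which is still not P-win; the key observation, as in your Pursuer-state step, is that from a non-P-win state no move can lead into a P-win state. Your argument instead builds the Pursuer's winning region explicitly as an attractor $W=\bigcup_{k}W_{k}$ and exhibits \emph{positional} witnesses on both sides, together with the quantitative bound $T(\overline{\sigma}^{1},\sigma^{2}|s)\leq\ell(s)$. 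This buys more than the paper's proof of this proposition alone: your level function $\ell$ satisfies the same recursion and boundary conditions as $\overline{T}$ (indeed $W_{k}=\{s:T^{k}(s)<\infty\}$, so $\ell=\widetilde{T}=\overline{T}$), and your P-win half already yields the uniform bound $T_{a}(s)$ that the paper only establishes later in Proposition \ref{prop03009} via the separate $S_{p},S_{q},S_{r}$ partition argument. Your proof also does not need the alternating-moves assumption that Section \ref{sec0302} imposes for presentational convenience. The trade-off is that your construction is essentially a set-valued reformulation of the VL algorithm of Section \ref{sec0301} (and of the $W_{n}$ sets of Berarducci--Intrigila, which the paper cites as the standard attractor approach to reachability games), whereas the authors' stated aim in Section \ref{sec0302} is to obtain the dichotomy \emph{without} re-deriving that machinery; as a standalone proof of the proposition, however, yours is complete and arguably tighter than the paper's somewhat informal ``continuing in this manner'' construction.
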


\begin{proof}
Pick any $\left(  \mathbf{N},S^{1},S^{2},S_{c},s\right)  $ which is not P-win.
We will show that it is E-win by constructing an Evader strategy
$\overline{\sigma}^{2}$ which satisfies (\ref{eq05002}).

Suppose that $s$ belongs to $S^{1}$ and let $s_{0}=s$. Further, suppose that
$P^{1}$ uses some strategy by which the game moves from $s_{0}$ to some
$s_{1}$. From (\ref{eq05003}) we know that for \emph{any} such $s_{1}$,
$P^{2}$ has at least one strategy by which he can ensure capture never takes
place. Suppose by some such strategy the game moves to some $s_{2}$ (which
will in general depend on $s_{0}$ and $s_{1}$). For all possible $s_{1}$'s
define
\[
\overline{\sigma}^{2}\left(  s_{0}s_{1}\right)  =s_{2}.
\]
This defines the part of $\overline{\sigma}^{2}$ which applies to histories of
length two, i.e., the ones belonging to
\[
\left\{  h=s_{0}s_{1}:s_{1}\in N\left(  s_{0}\right)  \right\}  .
\]
Now, from $s_{2}=\overline{\sigma}^{2}\left(  s_{0}s_{1}\right)  $ the game
can (depending on $P^{1}$'s strategy) move to any $s_{3}\in N\left(
s_{2}\right)  $; since $\left(  \mathbf{N},S^{1},S^{2},S_{c},s_{0}\right)  $
is not P-win, $P^{2}$ has at least one strategy by which he can ensure capture
never takes place from $s_{3}$. Suppose by some such strategy the game moves
to some $s_{4}$ (which will in general depend on $s_{0}s_{1}s_{2}s_{3}$). For
all possible $s_{3}$'s define
\[
\overline{\sigma}^{2}\left(  s_{0}s_{1}s_{2}s_{3}\right)  =s_{4}.
\]
This defines the part of $\overline{\sigma}^{2}$ which applies to all
histories belonging to
\[
\left\{  h=s_{0}s_{1}s_{2}s_{3}:s_{t}\in N\left(  s_{t-1}\right)  \text{ for
}t\in\left\{  1,2,3\right\}  \right\}  .
\]
Continuing in this manner we can extend the definition of $\overline{\sigma
}^{2}$ to all legal histories starting at $s_{0}$ and having finite length;
where at each step $P^{2}$ chooses a move not leading to a P-win state. In
other words, the extension is such that: for every finite history $s_{0}%
s_{1}...s_{2n+1}$, $\overline{\sigma}\left(  s_{0}s_{1}...s_{2n+1}\right)  $
leads to a state from which $P^{2}$ can ensure capture never takes place. We
can complete the definition of $\overline{\sigma}^{2}$ (on the rest of its
domain $H_{\ast}$) by specifying legal but otherwise arbitrary moves. Hence we
have constructed a $\overline{\sigma}^{2}$ which ensures that capture will
never take place, no matter how $P^{1}$ plays. In other words we have shown
that%
\[
\exists\overline{\sigma}^{2}:\forall\sigma^{1}:T\left(  \sigma_{1}%
,\overline{\sigma}^{2}|s_{0}\right)  =\infty
\]
which is exactly (\ref{eq05002}) and shows that $\left(  \mathbf{N}%
,S^{1},S^{2},S_{c},s_{0}\right)  $ is E-win. So we have shown that:\ for every
$s_{0}\in S^{1}$, if $\left(  \mathbf{N},S^{1},S^{2},S_{c},s_{0}\right)  $ is
not P-win, then it is E-win.

By a similar argument we reach the same conclusion for all $s_{0}\in S^{2}$;
hence the proof is complete.
\end{proof}

The next proposition shows that:\ (i)\ if $\left(  \mathbf{N},S^{1}%
,S^{2},S_{c},s\right)  $ is P-win then there is a Pursuer strategy which
provides an upper bound (\emph{valid for all }$\sigma^{2}$\emph{ strategies})
on the capture time; (ii)\ if $\left(  \mathbf{N},S^{1},S^{2},S_{c},s\right)
$ is E-win then there is an Evader strategy which ensures capture never takes
place. Our proof is an adaptation of Zermelo's proof of a similar proposition
regarding \emph{chess} \cite{Konig1927,Schwalbe2001}. While our proof is more
detailed than Zermelo's, it is based on his basic ideas.

\begin{proposition}
\label{prop03009}If $\left(  \mathbf{N},S^{1},S^{2},S_{c},s\right)  $ is
P-win, then
\begin{equation}
\exists T_{a}\left(  s\right)  :\exists\overline{\sigma}^{1}:\sup_{\sigma^{2}%
}T\left(  \overline{\sigma}^{1},\sigma^{2}|s\right)  \leq T_{a}\left(
s\right)  <\infty. \label{eq05004}%
\end{equation}
If $\left(  \mathbf{N},S^{1},S^{2},S_{c},s\right)  $ is E-win, then%
\begin{equation}
\exists\overline{\sigma}^{2}:\inf_{\sigma^{1}}T\left(  \sigma^{1}%
,\overline{\sigma}^{2}|s\right)  =\infty\label{eq05005}%
\end{equation}

\end{proposition}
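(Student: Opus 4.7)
The plan is to separate the two claims. Equation (\ref{eq05005}) is almost immediate from Definition \ref{prop03007}: any witness $\overline{\sigma}^2$ to the E-win property already satisfies $T(\sigma^1,\overline{\sigma}^2|s)=\infty$ for every $\sigma^1$, so the infimum over $\sigma^1$ is also $\infty$. I would therefore spend essentially all of the proof on (\ref{eq05004}), whose content is nontrivial: the P-win definition only asserts that each individual $\sigma^2$ leads to a finite capture time, and one must upgrade this pointwise finiteness to a \emph{uniform} bound $T_a(s)$ valid for all $\sigma^2$ simultaneously.

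For (\ref{eq05004}) I plan to argue in the style of the Zermelo-type proof cited in the excerpt, via K\"onig's Lemma. Fix a witness $\overline{\sigma}^1$ for P-win and form the rooted tree $\mathcal{T}$ whose nodes are the finite histories $s_0 s_1 \ldots s_t$ with $s_0 = s$ that (i) are consistent with $\overline{\sigma}^1$ at every Pursuer turn and (ii) contain no capture state before time $t$; declare a node a leaf as soon as $s_t \in S_c$. Because $V^1$ and $V^2$ are finite, every set $N(s)$ is finite, so $\mathcal{T}$ is finitely branching (branching factor one at Pursuer nodes and $|N(s_t)|$ at Evader nodes). Any infinite branch of $\mathcal{T}$ can be completed to an Evader strategy $\sigma^2$ (by assigning arbitrary legal values on histories off the branch) in such a way that $H(\overline{\sigma}^1,\sigma^2|s)$ is exactly that branch and hence $T(\overline{\sigma}^1,\sigma^2|s)=\infty$, contradicting the P-win property of $\overline{\sigma}^1$. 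Therefore $\mathcal{T}$ has no infinite branch, and K\"onig's Lemma yields that $\mathcal{T}$ is finite; its depth furnishes the required $T_a(s)$.

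The main obstacle is the passage from an infinite branch of $\mathcal{T}$ to a genuine strategy $\sigma^2 : H_\ast \to S$ in the sense of Definition \ref{prop02009}. One must verify that an infinite branch actually prescribes Evader moves along every finite history of the branch itself (which it does, by construction) and that the extension to off-branch histories can be chosen as a legal map without affecting the play against $\overline{\sigma}^1$. Once this technicality is settled, the K\"onig-lemma step and the identification of $T_a(s)$ with the tree depth are entirely routine, and the E-win half follows trivially as noted above.
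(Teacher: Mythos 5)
Your proof is correct, but it takes a genuinely different route from the paper's. The paper partitions $\overline{S}$ into three sets: $S_{p}$ (states from which some Pursuer winning strategy admits a uniform bound on capture time), $S_{q}$ (states from which the Pursuer can win but allegedly without a uniform bound), and $S_{r}$ (E-win states), and then shows $S_{q}=\emptyset$ by contradiction: from a putative $s_{0}\in S_{q}$ every Pursuer move must lead back into $S_{q}\cup S_{r}$, while from any state of $S_{q}\cup S_{r}$ the Evader always has a reply staying inside $S_{q}\cup S_{r}$ (finiteness of $N\left(  s\right)  $ enters there, in taking a maximum of bounds over the finitely many successors); iterating yields an Evader strategy that keeps the play in $S_{q}\cup S_{r}$ forever, hence avoids capture, contradicting $s_{0}\in S_{q}$. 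You instead fix a single winning witness $\overline{\sigma}^{1}$ and apply K\"{o}nig's Lemma to its finitely branching play tree pruned at capture states, so that an infinite branch would furnish an Evader strategy defeating $\overline{\sigma}^{1}$. Your argument is shorter, works at the level of histories rather than states, and in fact establishes something slightly stronger, namely that \emph{every} winning Pursuer strategy automatically carries a uniform bound rather than merely that some strategy does; it is also arguably closer to the classical K\"{o}nig--Zermelo argument that the paper itself cites. What the paper's version buys is that it stays entirely within the state space (consonant with its emphasis on positional strategies) and avoids invoking K\"{o}nig's Lemma as an external tool, keeping the treatment self-contained. Your handling of (\ref{eq05005}) is identical to the paper's, and the technicality you flag---completing an infinite branch to a total legal strategy $\sigma^{2}:H_{\ast}\rightarrow S$---is genuine but harmless, since $N\left(  s\right)  $ is nonempty for every $s$ and the off-branch values of $\sigma^{2}$ never affect the play $H\left(  \overline{\sigma}^{1},\sigma^{2}|s\right)  $.
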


\begin{proof}
Let us define the following sets%
\begin{align*}
S_{p}  &  =\left\{  s:\exists\overline{\sigma}^{1}:\text{(i)\ }\forall
\sigma^{2}:T\left(  \overline{\sigma}^{1},\sigma^{2}|s\right)  <\infty\text{
and (ii)\ }\sup_{\sigma^{2}}T\left(  \overline{\sigma}^{1},\sigma
^{2}|s\right)  <\infty\right\}  ,\\
S_{q}  &  =\left\{  s:\exists\overline{\sigma}^{1}:\text{(i)\ }\forall
\sigma^{2}:T\left(  \overline{\sigma}^{1},\sigma^{2}|s\right)  <\infty\text{
and (ii)\ }\sup_{\sigma^{2}}T\left(  \overline{\sigma}^{1},\sigma
^{2}|s\right)  =\infty\right\}  ,\\
S_{r}  &  =\left\{  s:\exists\overline{\sigma}^{2}:\forall\sigma^{1}:T\left(
\sigma^{1},\overline{\sigma}^{2}|s\right)  =\infty\right\}  .
\end{align*}
Clearly $S_{c}\subseteq S_{p}$. Also $S_{p}\cup S_{q}$ (resp. $S_{r}$)\ is the
set of all states such that $\left(  \mathbf{N},S^{1},S^{2},S_{c},s\right)  $
is P-win (resp. E-win). It follows from Proposition \ref{prop03008}, that the
complement of $S_{r}$ is $S_{p}\cup S_{q}$. Also, clearly, $S_{p}\cap
S_{q}=\emptyset$. Hence $S_{p},S_{q},S_{r}$ form a partition of $\overline{S}$.

We will prove by contradiction that $S_{q}=\emptyset$ and hence conclude that
(\ref{eq05004}) is true. \ So suppose that there exists $s_{0}\in S_{q}\cap
S^{1}$. If the Pursuer has a move from $s_{0}$ into some $s_{1}\in S_{p}$,
then
\[
\left(  \exists\overline{\sigma}^{1}:\sup_{\sigma^{2}}T\left(  \overline
{\sigma}^{1},\sigma^{2}|s_{1}\right)  <\infty\right)  \Rightarrow\left(
\exists\widetilde{\sigma}^{1}:\sup_{\sigma^{2}}T\left(  \widetilde{\sigma}%
^{1},\sigma^{2}|s_{0}\right)  <\infty\right)  \Rightarrow s\not \in S_{q}%
\]
which contradicts the initial assumption. Hence all the Pursuer's moves must
lead to some $s_{1}\in\left(  S_{q}\cup S_{r}\right)  \cap S_{2}$. Applying
the same reasoning to $s_{1}$, we see that the Evader must have a move into
some $s_{2}\in\left(  S_{q}\cup S_{r}\right)  \cap S_{1}$. Continuing in this
manner, we see that
\[
\exists\widetilde{\sigma}^{2}:\forall\sigma^{1}:H\left(  \sigma^{1}%
,\widetilde{\sigma}^{2}|s_{0}\right)  =s_{0}s_{1}s_{2}...\text{ such that:
}\forall n:s_{n}\in S_{q}\cup S_{r}\text{ }%
\]
which, since $S_{c}\cap\left(  S_{q}\cup S_{r}\right)  =\emptyset$, means
\[
\exists\widetilde{\sigma}^{2}:\forall\sigma^{1}:T\left(  s_{0}|\sigma
^{1},\widetilde{\sigma}^{2}\right)  =\infty
\]
which contradicts $s_{0}\in S_{q}$. Hence there cannot exist any $s_{0}\in
S_{q}\cap S^{1}$. Similarly we prove that there cannot exist any $s_{0}\in
S_{q}\cap S^{2}$. Hence $S_{q}$ is empty and we have proved (\ref{eq05004}).

We prove (\ref{eq05005}) immediately by using (\ref{eq05002}):%
\[
\left(  \forall\sigma^{1}:T\left(  \sigma^{1},\overline{\sigma}^{2}|s\right)
=\infty\right)  \Rightarrow\inf_{\sigma^{1}}T\left(  \sigma^{1},\overline
{\sigma}^{2}|s\right)  =\infty.
\]
The proof of the proposition is complete.\bigskip
\end{proof}

\bigskip

\bigskip

\noindent Our next goal is to prove that every $\left(  \mathbf{N},S^{1}%
,S^{2},S_{c},s\right)  $ has a value and optimal Pursuer and Evader
strategies. Before proceeding in this direction we need some auxiliary material.

\begin{definition}
\label{prop03010}Given the game $\left(  \mathbf{N},S^{1},S^{2},S_{c}%
,s\right)  $ and any $K\in$ $\mathbb{N}_{0}$, the $K$\emph{-truncated game}
(or simply the \emph{truncated game}) $\left(  \mathbf{N},S^{1},S^{2}%
,S_{c},s\right)  _{\left[  K\right]  }$ is identical to $\left(
\mathbf{N},S^{1},S^{2},S_{c},s\right)  $ except for the fact that it is played
for $K$ turns; consequently it has payoff function%
\[
Q\left(  s_{0}s_{1}...s_{K}\right)  =\sum_{t=0}^{K}q\left(  s_{t}\right)
\text{.}%
\]

\end{definition}

\begin{proposition}
\label{prop03011}For every $K\in$ $\mathbb{N}_{0}$, the game $\left(
\mathbf{N},S^{1},S^{2},S_{c},s\right)  _{\left[  K\right]  }$ has a value and
deterministic optimal strategies.
\end{proposition}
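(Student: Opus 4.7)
The plan is to prove this by induction on $K$, implementing backward induction from the terminal turn. Define scalars $v_K(s)$ recursively: set $v_0(s) = q(s)$, and for $K \geq 1$ let
\[
v_K(s) = q(s) + \min_{s' \in N(s)} v_{K-1}(s') \text{ if } s \in S^1, \qquad v_K(s) = q(s) + \max_{s' \in N(s)} v_{K-1}(s') \text{ if } s \in S^2,
\]
with the absorbing convention for $\tau$ and capture states. The claim is that $v_K(s)$ is the value of $(\mathbf{N},S^1,S^2,S_c,s)_{[K]}$ and that deterministic optimal strategies exist.

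The base case $K=0$ is trivial: no moves are made, the payoff equals $q(s)=v_0(s)$ for every pair of strategies, and any strategies are vacuously optimal. For the inductive step, assume the claim for $K-1$, so that for every $s' \in \overline{S}$ there exist deterministic optimal strategies $\widehat{\sigma}^{1}_{s',K-1}, \widehat{\sigma}^{2}_{s',K-1}$ realizing value $v_{K-1}(s')$ in $(\mathbf{N},S^1,S^2,S_c,s')_{[K-1]}$. Fix $s$ and suppose (WLOG) $s \in S^1$; pick $s^* \in N(s)$ achieving $\min_{s' \in N(s)} v_{K-1}(s')$. I stitch subgame strategies into strategies on the full history set by keying on the first successor of $s$: set $\widehat{\sigma}^1(s) = s^*$, and for any history $s\,s_1\,s_2\cdots s_t$ with $t \geq 1$, let $\widehat{\sigma}^n(s\,s_1\cdots s_t) = \widehat{\sigma}^{n}_{s_1,K-1}(s_1\cdots s_t)$ for $n \in \{1,2\}$ (extended arbitrarily on histories not starting at $s$).

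To verify optimality (via the truncated-game analogue of Proposition \ref{prop02017}), I check the two saddle-point inequalities. For any Evader strategy $\sigma^2$, the play under $(\widehat{\sigma}^1,\sigma^2)$ moves from $s$ to $s^*$ and then proceeds exactly as in $(\mathbf{N},S^1,S^2,S_c,s^*)_{[K-1]}$ under $(\widehat{\sigma}^{1}_{s^*,K-1},\tilde{\sigma}^2)$ for the induced tail $\tilde{\sigma}^2$; the induction hypothesis bounds the remaining contribution by $v_{K-1}(s^*)$, so the total payoff is at most $q(s)+v_{K-1}(s^*)=v_K(s)$. For any Pursuer strategy $\sigma^1$ sending the game from $s$ to $s_1 \in N(s)$, the induction hypothesis together with the definition of $\widehat{\sigma}^{2}_{s_1,K-1}$ gives a remaining payoff of at least $v_{K-1}(s_1) \geq v_{K-1}(s^*)$ (using the minimality defining $s^*$), hence total $\geq v_K(s)$. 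The case $s \in S^2$ is symmetric, with $\max$ replacing $\min$.

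The main obstacle is purely organizational: the careful stitching of distinct subgame strategies, one per successor state $s_1 \in N(s)$, into a single strategy on the full history space $H_*$, and the verification that the tails of arbitrary $\sigma^1,\sigma^2$ really do restrict to legitimate subgame strategies on $(\mathbf{N},S^1,S^2,S_c,s_1)_{[K-1]}$. Once this indexing is fixed, both inequalities reduce immediately to the inductive hypothesis combined with the optimality of the $\min$/$\max$ choice at $s$.
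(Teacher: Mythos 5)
Your proof is correct, but it takes a genuinely different route from the paper. The paper disposes of this proposition in two sentences: it observes that the $K$-truncated game is a \emph{finite} game of perfect information and simply cites the classical result (from Thomas's book) that such games have values and deterministic optimal strategies; no construction is given. You instead prove the classical result from scratch by backward induction on the horizon, defining the finite-horizon value function $v_K$ via the min/max recursion over successors and stitching optimal subgame strategies (indexed by the first successor of $s$) into strategies on the full history space. Your argument is sound: the base case, the saddle-point verification via the truncated analogue of Proposition \ref{prop02017}, and the handling of the tail strategies are all standard and correct, and the ``organizational'' issue you flag (that restrictions of arbitrary strategies to histories extending $s\,s_1$ are legitimate subgame strategies) is indeed routine. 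What your approach buys is self-containedness -- and it makes visible that the finite-horizon recursion is essentially the VL algorithm of Section \ref{sec0301} in disguise, tracking truncated payoff instead of capture time. What the paper's approach buys is brevity, and it is a deliberate choice: Section \ref{sec0302} is explicitly advertised as \emph{not} self-contained, leaning on classical game theory precisely to contrast with the self-contained Section \ref{sec0301}. Your proof would fit more naturally as an appendix to the first solution than as a step in the second.
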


\begin{proof}
A full proof can be found in \cite{Thomas}. The main point is that the
truncated game is a \emph{finite} game and hence known to have a value and
optimal strategies. The strategies are deterministic because the game has
perfect information.
\end{proof}

\begin{definition}
\label{prop03012}Given the truncated game $\left(  \mathbf{N},S^{1}%
,S^{2},S_{c},s\right)  _{\left[  K\right]  }$ and strategies $\sigma
^{1},\sigma^{2}$, we define
\[
T_{\left[  K\right]  }\left(  \sigma^{1},\sigma^{2}|s\right)  =Q\left(
H\left(  \sigma^{1},\sigma^{2}|s\right)  \right)  .
\]
In other words, $T_{\left[  K\right]  }\left(  \sigma^{1},\sigma^{2}|s\right)
$ is $P^{2}$'s payoff in $\left(  \mathbf{N},S^{1},S^{2},S_{c},s\right)
_{\left[  K\right]  }$ when the strategies $\sigma^{1},\sigma^{2}$ are used.
\end{definition}

\bigskip

\noindent We use the bound $T_{a}\left(  s\right)  $ of Proposition
\ref{prop03009} to define the truncated game $\left(  \mathbf{N},S^{1}%
,S^{2},S_{c},s\right)  _{\left[  T_{a}\left(  s\right)  \right]  }$; by
Definition \ref{prop03012}, when the players use strategies $\sigma^{1}$ and
$\sigma^{2}$ the corresponding payoff is $T_{\left[  T_{a}\left(  s\right)
\right]  }\left(  \sigma^{1},\sigma^{2}|s\right)  $. To simplify notation we
write $\left(  \mathbf{N},S^{1},S^{2},S_{c},s\right)  _{\#}$ in place of
$\left(  \mathbf{N},S^{1},S^{2},S_{c},s\right)  _{\left[  T_{a}\left(
s\right)  \right]  }$ and $T_{\#}\left(  \sigma^{1},\sigma^{2}|s\right)  $ in
place of $T_{\left[  T_{a}\left(  s\right)  \right]  }\left(  \sigma
^{1},\sigma^{2}|s\right)  $. As mentioned, $\left(  \mathbf{N},S^{1}%
,S^{2},S_{c},s\right)  _{\#}$ has a value $\widehat{T}\left(  s\right)  $ and
an optimal deterministic strategy pair $\left(  \widehat{\sigma}^{1}%
,\widehat{\sigma}^{2}\right)  $. \ 

In what follows we will often need to use the \textquotedblleft
same\textquotedblright\ strategies in both $\left(  \mathbf{N},S^{1}%
,S^{2},S_{c},s\right)  $ and $\left(  \mathbf{N},S^{1},S^{2},S_{c},s\right)
_{\#}$. The domain of a strategy $\sigma^{n}$ for $\left(  \mathbf{N}%
,S^{1},S^{2},S_{c},s\right)  $ is the set of all finite histories, so we can
also use $\sigma^{n}$ in $\left(  \mathbf{N},S^{1},S^{2},S_{c},s\right)
_{\#}$ by applying it only to histories of length at most $T_{a}\left(
s\right)  $. To use a $\left(  \mathbf{N},S^{1},S^{2},S_{c},s\right)  _{\#}$
strategy $\sigma^{n}$ in $\left(  \mathbf{N},S^{1},S^{2},S_{c},s\right)  $, we
extend it as follows:\ for each history $h$ with length greater than
$T_{a}\left(  s\right)  $, $\sigma^{n}\left(  h\right)  $ is the stay-in-place move.

\bigskip

\noindent We are now ready to prove that $\left(  \mathbf{N},S^{1},S^{2}%
,S_{c},s\right)  $ has a value $\widehat{T}\left(  s\right)  $ and an optimal
deterministic strategy pair $\left(  \widehat{\sigma}^{1},\widehat{\sigma}%
^{2}\right)  $.

\begin{proposition}
\label{prop03013}For every $s\in\overline{S}$, the game $\left(
\mathbf{N},S^{1},S^{2},S_{c},s\right)  $ has a value $\widehat{T}\left(
s\right)  $ and optimal Pursuer and Evader strategies $\widehat{\sigma}^{1}$,
$\widehat{\sigma}^{2}$ which attain the value, i.e., $\widehat{T}\left(
s\right)  =T\left(  \widehat{\sigma}^{1},\widehat{\sigma}^{2}|s\right)  $.
\end{proposition}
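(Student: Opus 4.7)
The plan is to split into two cases according to Proposition \ref{prop03008}: the game $\left(\mathbf{N},S^{1},S^{2},S_{c},s\right)$ is either E-win or P-win. The E-win case is short. By Proposition \ref{prop03009}, there is an Evader strategy $\overline{\sigma}^{2}$ with $\inf_{\sigma^{1}}T\left(\sigma^{1},\overline{\sigma}^{2}|s\right)=\infty$, which forces $T^{-}(s)\geq\inf_{\sigma^{1}}T\left(\sigma^{1},\overline{\sigma}^{2}|s\right)=\infty$, and since $T^{+}(s)\geq T^{-}(s)$, both are $\infty$. Thus the value exists and equals $\widehat{T}(s)=\infty$. Taking $\widehat{\sigma}^{2}=\overline{\sigma}^{2}$ and any $\widehat{\sigma}^{1}$, the two conditions in Proposition \ref{prop02017} hold trivially.

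For the P-win case, I would use Proposition \ref{prop03009} to obtain a finite bound $T_{a}(s)$ and a Pursuer strategy $\overline{\sigma}^{1}$ achieving it. I then pass to the truncated game $\left(\mathbf{N},S^{1},S^{2},S_{c},s\right)_{\#}$ with horizon $T_{a}(s)$, which by Proposition \ref{prop03011} has a value $\widehat{T}(s)$ and an optimal deterministic pair $\left(\widehat{\sigma}^{1},\widehat{\sigma}^{2}\right)$. The key preliminary observation is that the Pursuer can play $\overline{\sigma}^{1}$ in the truncated game to force $T_{\#}\left(\overline{\sigma}^{1},\sigma^{2}|s\right)\leq T_{a}(s)$, so $\widehat{T}(s)\leq T_{a}(s)<T_{a}(s)+1$ (recall that the truncated payoff equals $T_{a}(s)+1$ iff no capture occurs within the horizon). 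I extend $\widehat{\sigma}^{1},\widehat{\sigma}^{2}$ to the full game by any fixed rule (e.g.\ stay-in-place) on histories longer than $T_{a}(s)$, and verify the two inequalities of Proposition \ref{prop02017}.

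The verification is where the crux lies, namely bridging truncated and untruncated payoffs. For $\widehat{\sigma}^{1}$: optimality in the truncated game gives $T_{\#}\left(\widehat{\sigma}^{1},\sigma^{2}|s\right)\leq\widehat{T}(s)<T_{a}(s)+1$ for every $\sigma^{2}$; since the only way to reach the value $T_{a}(s)+1$ is to avoid $S_{c}$ for all $T_{a}(s)+1$ time steps, capture must occur within the horizon, and then $T\left(\widehat{\sigma}^{1},\sigma^{2}|s\right)=T_{\#}\left(\widehat{\sigma}^{1},\sigma^{2}|s\right)\leq\widehat{T}(s)$. For $\widehat{\sigma}^{2}$: a short case analysis (capture inside horizon, capture after horizon, or no capture) shows $T\left(\sigma^{1},\widehat{\sigma}^{2}|s\right)\geq T_{\#}\left(\sigma^{1},\widehat{\sigma}^{2}|s\right)$ for every $\sigma^{1}$, and combining with $T_{\#}\left(\sigma^{1},\widehat{\sigma}^{2}|s\right)\geq\widehat{T}(s)$ from truncated optimality yields the required lower bound. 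I expect the main obstacle to be exactly this bookkeeping: carefully using the gap between $\widehat{T}(s)$ and the ``no-capture'' truncated payoff $T_{a}(s)+1$ to upgrade the truncated bound for $\widehat{\sigma}^{1}$ into an honest finite capture time in the untruncated game. Once this is in place, the finiteness of $\widehat{T}(s)=T\left(\widehat{\sigma}^{1},\widehat{\sigma}^{2}|s\right)$ follows automatically from the two inequalities, completing the proof.
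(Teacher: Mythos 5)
Your proposal is correct and follows essentially the same route as the paper: the E-win case is handled identically via Proposition \ref{prop03009}, and the P-win case uses the same device of passing to the $T_{a}(s)$-truncated game, invoking Proposition \ref{prop03011} for its value and optimal pair, and transferring optimality back to the untruncated game. The only difference is presentational — you verify the two inequalities of Proposition \ref{prop02017} directly (using the gap between $\widehat{T}(s)\leq T_{a}(s)$ and the no-capture payoff $T_{a}(s)+1$), whereas the paper runs the equivalent argument by contradiction.
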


\begin{proof}
We treat the P-win and E-win cases separately. Because of perfect information,
all strategies mentioned in the rest of the proof are \emph{deterministic}.

\noindent\underline{\textbf{Part I}}. Suppose that $s\in\overline{S}$ is such
that $\left(  \mathbf{N},S^{1},S^{2},S_{c},s\right)  $ is P-win. Keeping in
mind the stay-in-place extension of strategies we see the following.

\begin{enumerate}
\item $P^{1}$ can use the $\overline{\sigma}^{1}$ of Proposition
\ref{prop03009} in $\left(  \mathbf{N},S^{1},S^{2},S_{c},s\right)  _{\#}$ and
clearly we have
\[
\sup_{\sigma^{2}}T_{\#}\left(  \overline{\sigma}^{1},\sigma^{2}|s\right)  \leq
T_{a}\left(  s\right)
\]
(the Evader cannot have a higher payoff than the number of turns).

\item $P^{1}$ (resp. $P^{2}$) can use $\widehat{\sigma}^{1}$ (resp.
$\widehat{\sigma}^{2}$) in $\left(  \mathbf{N},S^{1},S^{2},S_{c},s\right)
_{\#}$ and
\begin{equation}
T\left(  \widehat{\sigma}^{1},\widehat{\sigma}^{2}|s\right)  =T_{\#}\left(
\widehat{\sigma}^{1},\widehat{\sigma}^{2}|s\right)  =\sup_{\sigma^{2}}%
\inf_{\sigma^{1}}T_{\#}\left(  \sigma^{1},\sigma^{2}|s\right)  =\inf
_{\sigma^{1}}\sup_{\sigma^{2}}T_{\#}\left(  \sigma^{1},\sigma^{2}|s\right)
\leq T_{a}\left(  s\right)  . \label{eq03029}%
\end{equation}
The equality $T\left(  \widehat{\sigma}^{1},\widehat{\sigma}^{2}|s\right)
=T_{\#}\left(  \widehat{\sigma}^{1},\widehat{\sigma}^{2}|s\right)  $ follows
from the fact that $\left(  \mathbf{N},S^{1},S^{2},S_{c},s\right)  $ is P-win;
hence, by Proposition \ref{prop03009}, $\overline{\sigma}^{1}$ guarantees
capture in at most $T_{a}\left(  s\right)  $ turns and $\widehat{\sigma}^{1}$
(which is optimal) will do at least as well. Since $\left(  \mathbf{N}%
,S^{1},S^{2},S_{c},s\right)  _{\#}$ lasts for $T_{a}\left(  s\right)  $ turns,
the required inequality is obvious.

\item From optimality and Proposition \ref{prop02017} we also get%
\begin{equation}
\forall\sigma^{1},\sigma^{2}:T_{\#}\left(  \widehat{\sigma}^{1},\sigma
^{2}|s\right)  \leq T_{\#}\left(  \widehat{\sigma}^{1},\widehat{\sigma}%
^{2}|s\right)  \leq T_{\#}\left(  \sigma^{1},\widehat{\sigma}^{2}|s\right)  .
\label{eq05006}%
\end{equation}

\end{enumerate}

Now suppose there exists some strategy $\widetilde{\sigma}^{1}$ such that
\[
T\left(  \widetilde{\sigma}^{1},\widehat{\sigma}^{2}|s\right)  <T\left(
\widehat{\sigma}^{1},\widehat{\sigma}^{2}|s\right)  \leq T_{a}\left(
s\right)  .
\]
Then we will also have
\[
T_{\#}\left(  \widetilde{\sigma}^{1},\widehat{\sigma}^{2}|s\right)  =T\left(
\widetilde{\sigma}^{1},\widehat{\sigma}^{2}|s\right)  <T\left(  \widehat
{\sigma}^{1},\widehat{\sigma}^{2}|s\right)  =T_{\#}\left(  \widehat{\sigma
}^{1},\widehat{\sigma}^{2}|s\right)
\]
which contradicts (\ref{eq05006}).

Similarly, suppose there exists some strategy $\widetilde{\sigma}^{2}$ such
that
\begin{equation}
T\left(  \widehat{\sigma}^{1},\widehat{\sigma}^{2}|s\right)  <T\left(
\widehat{\sigma}^{1},\widetilde{\sigma}^{2}|s\right)  . \label{eq05007}%
\end{equation}
Then we have the following cases.

\begin{enumerate}
\item If $T\left(  \widehat{\sigma}^{1},\widetilde{\sigma}^{2}|s\right)  \leq
T_{a}\left(  s\right)  $ then (both $\left(  \widehat{\sigma}^{1}%
,\widetilde{\sigma}^{2}\right)  $ and $\left(  \widehat{\sigma}^{1}%
,\widehat{\sigma}^{2}\right)  $ result in capture before the truncated game is
over):%
\[
T_{\#}\left(  \widehat{\sigma}^{1},\widehat{\sigma}^{2}|s\right)  =T\left(
\widehat{\sigma}^{1},\widehat{\sigma}^{2}|s\right)  <T\left(  \widehat{\sigma
}^{1},\widetilde{\sigma}^{2}|s\right)  =T_{\#}\left(  \widehat{\sigma}%
^{1},\widetilde{\sigma}^{2}|s\right)  ;
\]
which contradicts (\ref{eq05006}).

\item If $T\left(  \widehat{\sigma}^{1},\widetilde{\sigma}^{2}|s\right)
>T_{a}\left(  s\right)  $ then $T_{\#}\left(  \widehat{\sigma}^{1}%
,\widetilde{\sigma}^{2}|s\right)  =T_{a}\left(  s\right)  $ (the truncated
game finishes before capture)\ and we have the following subcases.
\[
T_{a}\left(  s\right)  >T\left(  \widehat{\sigma}^{1},\widehat{\sigma}%
^{2}|s\right)  \Rightarrow T_{\#}\left(  \widehat{\sigma}^{1},\widetilde
{\sigma}^{2}|s\right)  =T_{a}\left(  s\right)  >T_{\#}\left(  \widehat{\sigma
}^{1},\widehat{\sigma}^{2}|s\right)
\]
which contradicts (\ref{eq05006}); and
\[
T_{a}\left(  s\right)  =T\left(  \widehat{\sigma}^{1},\widehat{\sigma}%
^{2}|s\right)  \Rightarrow T_{\#}\left(  \widehat{\sigma}^{1},\widetilde
{\sigma}^{2}|s\right)  =T_{a}\left(  s\right)  =T_{\#}\left(  \widehat{\sigma
}^{1},\widehat{\sigma}^{2}|s\right)
\]
which contradicts (\ref{eq05007}).
\end{enumerate}

\noindent\underline{\textbf{Part II}}. Suppose that $s\in\overline{S}$ is such
that $\left(  \mathbf{N},S^{1},S^{2},S_{c},s\right)  $ is E-win. In this case
we start from (\ref{eq05005}) and get%
\[
\inf_{\sigma^{1}}T\left(  \sigma^{1},\overline{\sigma}^{2}|s\right)
=\infty\Rightarrow\sup_{\sigma^{2}}\inf_{\sigma^{1}}T\left(  \sigma^{1}%
,\sigma^{2}|s\right)  =\infty\Rightarrow\sup_{\sigma^{2}}\inf_{\sigma^{1}%
}T\left(  \sigma^{1},\sigma^{2}|s\right)  =\inf_{\sigma^{1}}\sup_{\sigma^{2}%
}T\left(  \sigma^{1},\sigma^{2}|s\right)  =\infty.
\]
Hence we have shown \ $\widehat{T}\left(  s\right)  =\infty$. Since
$\overline{\sigma}^{2}$ achieves $\widehat{T}\left(  s\right)  $, it is an
Evader optimal strategy. Any $\sigma^{1}$ achieves $\widehat{T}\left(
s\right)  $, hence any $\sigma^{1}$ is a Pursuer optimal strategy.
\end{proof}

\begin{definition}
\label{prop03014}We define the following two \emph{positional} strategies%
\begin{align*}
\text{A Pursuer strategy }\widehat{\sigma}^{1}  &  :\forall s\in
S^{1}\backslash S_{c}:\widehat{\sigma}^{1}\left(  s\right)  =\arg\min
{}_{s^{\prime}\in N\left(  s\right)  }\widehat{T}\left(  s^{\prime}\right)
,\\
\text{An Evader strategy }\widehat{\sigma}^{2}  &  :\forall s\in
S^{2}\backslash S_{c}:\widehat{\sigma}^{2}\left(  s\right)  =\arg\max
{}_{s^{\prime}\in N\left(  s\right)  }\widehat{T}\left(  s^{\prime}\right)  .
\end{align*}

\end{definition}

\noindent The next proposition shows $\left(  \widehat{\sigma}^{1}%
,\widehat{\sigma}^{2}\right)  $ is an optimal strategy pair in $\left(
\mathbf{N},S^{1},S^{2},S_{c},s\right)  $ \emph{for every} $s$. For the sake of
brevity, in the proof we will use the following notation:\ given a positional
strategy profile $\sigma=\left(  \sigma^{1},\sigma^{2}\right)  $ and a state
$s=\left(  x^{1},x^{2},p\right)  $, we let%
\[
\sigma\left(  s\right)  =\left(  \sigma^{p}\left(  s\right)  ,x^{-p}%
,-p\right)  .
\]
In other words, $\sigma\left(  s\right)  $ is the state to which $s$ transits
when the player $P^{p}$ (who has the move in $s$) applies his strategy
$\sigma^{p}$.

\begin{proposition}
\label{prop03015}For a given $\left(  \mathbf{N},S^{1},S^{2},S_{c},s\right)  $
and for all $s$, we have%
\begin{align}
\forall\sigma^{1}  &  :\forall s:T\left(  \widehat{\sigma}^{1},\widehat
{\sigma}^{2}|s\right)  \leq T\left(  \sigma^{1},\widehat{\sigma}^{2}|s\right)
,\label{eq06001}\\
\forall\sigma^{2}  &  :\forall s:T\left(  \widehat{\sigma}^{1},\widehat
{\sigma}^{2}|s\right)  \geq T\left(  \widehat{\sigma}^{1},\sigma^{2}|s\right)
. \label{eq06002}%
\end{align}

\end{proposition}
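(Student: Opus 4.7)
The plan is to prove Proposition \ref{prop03015} by reducing to an inductive argument essentially identical to the one used in Proposition \ref{prop03006}, with the VL-label $\overline{T}$ replaced by the value $\widehat{T}$ supplied by Proposition \ref{prop03013}. The critical intermediate fact is that $\widehat{T}$ satisfies the same pair of optimality equations as $\overline{T}$, namely
\begin{align*}
\forall s \in S^{1} \setminus S_{c} &: \widehat{T}(s) = 1 + \min_{s' \in N(s)} \widehat{T}(s'), \\
\forall s \in S^{2} \setminus S_{c} &: \widehat{T}(s) = 1 + \max_{s' \in N(s)} \widehat{T}(s').
\end{align*}
Once these are in hand, the strategies of Definition \ref{prop03014} are by construction one-step greedy with respect to $\widehat{T}$, and the remainder is a transcription of the proof of Proposition \ref{prop03006}.

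To establish the optimality equations, I would fix $s \in S^{1} \setminus S_{c}$ and exploit the following decomposition: every play starting at $s$ first has $P^{1}$ pick some $s' \in N(s)$, and from that turn onwards the play is identical to a play of the subgame $\left(\mathbf{N}, S^{1}, S^{2}, S_{c}, s'\right)$, with the single extra payoff $q(s) = 1$ accumulated for the starting turn. Since any $\sigma^{n}$ on $H_{\ast}$ restricts to a strategy in the subgame at $s'$, and conversely any subgame strategy lifts back after prepending the first move, the value of the whole game satisfies $\widehat{T}(s) = 1 + \min_{s' \in N(s)} \widehat{T}(s')$, with Proposition \ref{prop03013} identifying each subgame value. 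The case $s \in S^{2} \setminus S_{c}$ is symmetric with $\max$ replacing $\min$, and the finite- and infinite-value cases are handled uniformly under the convention $1 + \infty = \infty$.

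With the optimality equations available, Part I of the proof of Proposition \ref{prop03006} goes through verbatim (after replacing $\overline{T}$ by $\widehat{T}$) to establish both (\ref{eq06001}) and (\ref{eq06002}) for every $s$ with $\widehat{T}(s) < \infty$, by induction on $\widehat{T}(s) \in \mathbb{N}_{0}$. The case $\widehat{T}(s) = \infty$ is disposed of by the same contradiction argument as Part II of Proposition \ref{prop03006}: the Pursuer inequality (\ref{eq06002}) holds trivially, while for (\ref{eq06001}) one shows that no play $H(\sigma^{1}, \widehat{\sigma}^{2} | s)$ can visit a finite-value state, because if $s_{k+1}$ were the first such visit, either $s_{k} \in S^{1}$ (in which case $s_{k+1} \in N(s_{k})$ forces the right-hand side of the min equation at $s_{k}$ to be finite) or $s_{k} \in S^{2}$ (in which case $s_{k+1} = \widehat{\sigma}^{2}(s_{k})$ is the arg-max, so $\max_{u \in N(s_{k})}\widehat{T}(u) = \widehat{T}(s_{k+1}) < \infty$), and either way $\widehat{T}(s_{k}) < \infty$, a contradiction.

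The main obstacle is the book-keeping in Step 1. The assertion that strategies restrict and lift cleanly between $\left(\mathbf{N}, S^{1}, S^{2}, S_{c}, s\right)$ and $\left(\mathbf{N}, S^{1}, S^{2}, S_{c}, s'\right)$ is visually transparent but has to be justified because strategies are defined on $H_{\ast}$ rather than on states; the cleanest route is to parameterize strategies in the game at $s$ by their behavior on histories whose initial state is $s$, check that fixing $\sigma^{1}(s) = s'$ yields a bijection with pairs consisting of (a) a choice of $s' \in N(s)$ and (b) a strategy pair for the subgame at $s'$, and verify that this bijection translates the total payoff $T(\cdot,\cdot|s)$ by $1$. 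Everything downstream is routine induction.
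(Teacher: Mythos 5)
Your proposal is correct, but it takes a genuinely different route from the paper's. The paper proves Proposition \ref{prop03015} directly by a one-step-deviation, telescoping argument: for an arbitrary $\sigma^{1}$ it runs the actual play $s_{0}\widetilde{s}_{1}\widetilde{s}_{2}\ldots$ generated by $\left(  \sigma^{1},\widehat{\sigma}^{2}\right)  $ alongside the shadow moves $\widehat{s}_{t+1}=\widehat{\sigma}\left(  \widetilde{s}_{t}\right)  $ and chains the inequalities $T\left(  \widehat{\sigma}|s_{0}\right)  \leq k+T\left(  \widehat{\sigma}|\widetilde{s}_{k}\right)  $ until capture occurs or forever; the optimality equations (Proposition \ref{prop03016}) are only stated afterwards. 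You instead front-load those equations, proving them by a subgame decomposition from Proposition \ref{prop03013} --- which is legitimate and non-circular, since your argument for them nowhere uses Proposition \ref{prop03015} --- and then rerun the value-induction of Proposition \ref{prop03006} with $\overline{T}$ replaced by $\widehat{T}$. What your route buys is reuse of machinery already proved and an explicit justification of the Bellman equations that the paper dismisses as ``straightforward''; what the paper's route buys is avoidance of the history-restriction book-keeping you correctly single out as the main obstacle. One point to tighten if you write this out in full: in the Evader half of the transcribed induction, at a state $s\in S^{1}$ with $\widehat{T}\left(  s\right)  =k+1$ an arbitrary $\sigma^{1}$ may move to a successor whose value is strictly greater than $k$ (possibly $\infty$), a case the induction hypothesis indexed by values at most $k$ does not cover; the clean fix is to induct on the statement ``$\widehat{T}\left(  s\right)  \geq n$ implies $T\left(  \sigma^{1},\widehat{\sigma}^{2}|s\right)  \geq n$''. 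The paper's own proof of Proposition \ref{prop03006} is no more explicit on this point, so you inherit rather than introduce that gap.
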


\begin{proof}
We will only prove (\ref{eq06001}) for any $s\in S^{1}$ (the proofs of the
remaining parts are similar). Let $s_{0}=s$ and $\widehat{\sigma}=\left(
\widehat{\sigma}^{1},\widehat{\sigma}^{2}\right)  $; furthermore, pick any
Pursuer strategy $\sigma^{1}$ and let $\widetilde{\sigma}=\left(  \sigma
^{1},\widehat{\sigma}^{2}\right)  $. Now define the state sequences
$s_{0}\widetilde{s}_{1}\widetilde{s}_{2}...$ and $s_{0}\widehat{s}_{1}%
\widehat{s}_{2}...$ as follows%
\[%
\begin{array}
[c]{lllll}%
\widetilde{s}_{1}=\widetilde{\sigma}\left(  s_{0}\right)  , & \widetilde
{s}_{2}=\widetilde{\sigma}\left(  \widetilde{s}_{1}\right)  , & \widetilde
{s}_{3}=\widetilde{\sigma}\left(  \widetilde{s}_{2}\right)  , & \widetilde
{s}_{4}=\widetilde{\sigma}\left(  \widetilde{s}_{3}\right)  , & ...\\
\widehat{s}_{1}=\widehat{\sigma}\left(  s_{0}\right)  , & \widehat{s}%
_{2}=\widehat{\sigma}\left(  \widetilde{s}_{1}\right)  , & \widehat{s}%
_{3}=\widehat{\sigma}\left(  \widetilde{s}_{2}\right)  , & \widehat{s}%
_{4}=\widehat{\sigma}\left(  \widetilde{s}_{3}\right)  , & ...
\end{array}
\]
Note that $\widetilde{s}_{0}\widetilde{s}_{1}\widetilde{s}_{2}...$ is
$H\left(  \widetilde{\sigma}|s_{0}\right)  $ but $\widehat{s}_{0}\widehat
{s}_{1}\widehat{s}_{2}...$ is \emph{not} $H\left(  \widehat{\sigma}%
|s_{0}\right)  $ (why?). Also note that%
\[
\widetilde{s}_{2}=\widehat{s}_{2},\quad\widetilde{s}_{4}=\widehat{s}_{4}%
,\quad\widetilde{s}_{6}=\widehat{s}_{6},\quad\widetilde{s}_{8}=\widehat{s}%
_{8},\quad...\text{ .}%
\]
Because \ $\widehat{\sigma}^{1}$ always chooses minimizing successor states,
we have the following sequence of inequalities.%
\begin{align*}
T\left(  \widehat{\sigma}|s_{0}\right)   &  =1+T\left(  \widehat{\sigma
}|\widehat{s}_{1}\right)  \leq1+T\left(  \widehat{\sigma}|\widetilde{s}%
_{1}\right) \\
T\left(  \widehat{\sigma}|\widetilde{s}_{1}\right)   &  =1+T\left(
\widehat{\sigma}|\widehat{s}_{2}\right)  =1+T\left(  \widehat{\sigma
}|\widetilde{s}_{2}\right)  \Rightarrow T\left(  \widehat{\sigma}%
|s_{0}\right)  \leq2+T\left(  \widehat{\sigma}|\widetilde{s}_{2}\right) \\
T\left(  \widehat{\sigma}|\widetilde{s}_{2}\right)   &  =1+T\left(
\widehat{\sigma}|\widehat{s}_{3}\right)  \leq1+T\left(  \widehat{\sigma
}|\widetilde{s}_{3}\right)  \Rightarrow T\left(  \widehat{\sigma}%
|s_{0}\right)  \leq3+T\left(  \widehat{\sigma}|\widetilde{s}_{3}\right) \\
T\left(  \widehat{\sigma}|\widetilde{s}_{3}\right)   &  =1+T\left(
\widehat{\sigma}|\widehat{s}_{4}\right)  =1+T\left(  \widehat{\sigma
}|\widetilde{s}_{4}\right)  \Rightarrow T\left(  \widehat{\sigma}%
|s_{0}\right)  \leq4+T\left(  \widehat{\sigma}|\widetilde{s}_{4}\right) \\
&  ...
\end{align*}
This sequence of inequalities (i)\ either will continue until we reach some
$\widetilde{s}_{K}\in S_{c}$ (ii)\ or, if no capture state is ever reached,
will go on ad infinitum. Let us consider each case separately.

\begin{enumerate}
\item If there exists some $K\in\mathbb{N}$ such that $\widetilde{s}_{K}\in
S_{c}$ , then $T\left(  \widetilde{\sigma}|s_{0}\right)  =K\ $and we have%
\[
T\left(  \widehat{\sigma}|s_{0}\right)  \leq K+T\left(  \widehat{\sigma}%
|s_{K}\right)  =T\left(  \widetilde{\sigma}|s_{0}\right)  +0.
\]

\item If for all $k\in\mathbb{N}$ we have $s_{k}\not \in S_{c}$ , then
$T\left(  \widetilde{\sigma}|s_{0}\right)  =\infty\ $and we have%
\[
T\left(  \widehat{\sigma}|s_{0}\right)  \leq\infty=T\left(  \widetilde{\sigma
}|s_{0}\right)  .
\]

\end{enumerate}

\noindent In either case we have proved that $T\left(  \widehat{\sigma}%
|s_{0}\right)  \leq T\left(  \widetilde{\sigma}|s_{0}\right)  $ which, written
in more detail, is%
\[
\forall\sigma^{1}:\forall s_{0}\in S^{1}:T\left(  \widehat{\sigma}%
^{1},\widehat{\sigma}^{2}|s_{0}\right)  \leq T\left(  \sigma^{1}%
,\widehat{\sigma}^{2}s_{0}\right)  .
\]
In other words we have proved (\ref{eq06001}) when $s\in S^{1}$; the proof of
(\ref{eq06001}) when $s\in S^{2}$ as well as the proof of (\ref{eq06002}) are similar.
\end{proof}

To summarize, up to this point we have proved that $\left(  \mathbf{N}%
,S^{1},S^{2},S_{c},s\right)  $ has a value and optimal positional strategies
$\widehat{\sigma}^{1},\widehat{\sigma}^{2}$\ (specified in terms of the
collection of values $\left(  \widehat{T}\left(  s\right)  \right)
_{s\in\overline{S}}$)\ \emph{without using the VL\ algorithm of Section
}\ref{sec0301}. Our final target is to show, \emph{without using the results
of Section \ref{sec0301}}, that the values (i)\ satisfy the optimality
equations and (ii)\ can be computed by the VL\ algorithm.

\begin{proposition}
\label{prop03016}The values $\left(  \widehat{T}\left(  s\right)  \right)
_{s\in\overline{S}}$ of the games $\left(  \mathbf{N},S^{1},S^{2}%
,S_{c},s\right)  _{s\in\overline{S}}$ satisfy the \emph{optimality equations}.%
\begin{align}
\forall s  &  \in S_{c}:\widehat{T}\left(  s\right)  =0,\\
\forall s  &  \in S^{1}\backslash S_{c}:\widehat{T}\left(  s\right)
=1+\min_{s^{\prime}\in N\left(  s\right)  }\widehat{T}\left(  s^{\prime
}\right)  ,\\
\forall s  &  \in S^{2}\backslash S_{c}:\widehat{T}\left(  s\right)
=1+\max_{s^{\prime}\in N\left(  s\right)  }\widehat{T}\left(  s^{\prime
}\right)  .
\end{align}

\end{proposition}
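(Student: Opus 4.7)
The plan is to derive the three equations by unrolling the value function one step using the positional optimal pair $(\widehat{\sigma}^1,\widehat{\sigma}^2)$ of Definition~\ref{prop03014}. The $s\in S_c$ case is immediate: the movement rules give $N(s)=\{\tau\}$ with $q(s)=q(\tau)=0$ by Definition~\ref{prop0203}, so $T(\sigma^1,\sigma^2|s)=0$ for every strategy pair, whence $\widehat{T}(s)=0$.

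For the recursive cases the crucial preliminary observation is that the positional pair itself attains the value:
\[
\widehat{T}(s) \;=\; T(\widehat{\sigma}^1,\widehat{\sigma}^2|s) \qquad \text{for every } s\in\overline{S}.
\]
Indeed, Proposition~\ref{prop03015} provides the saddle-point inequalities $T(\widehat{\sigma}^1,\sigma^2|s)\leq T(\widehat{\sigma}^1,\widehat{\sigma}^2|s)\leq T(\sigma^1,\widehat{\sigma}^2|s)$ for all $\sigma^1,\sigma^2$; taking $\sup_{\sigma^2}$ on the left and $\inf_{\sigma^1}$ on the right, and invoking Proposition~\ref{prop02015}, sandwiches both $T^-(s)$ and $T^+(s)$ at the common quantity $T(\widehat{\sigma}^1,\widehat{\sigma}^2|s)$, which by Definition~\ref{prop02016} and Proposition~\ref{prop03013} must equal $\widehat{T}(s)$.

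Given the displayed identity, the recursion is a direct consequence of positionality. For $s\in S^1\setminus S_c$, set $s'=\widehat{\sigma}^1(s)\in N(s)$; because both strategies are positional, the history $H(\widehat{\sigma}^1,\widehat{\sigma}^2|s)$ is simply $s$ prepended to $H(\widehat{\sigma}^1,\widehat{\sigma}^2|s')$, and $q(s)=1$ since $s\notin S_c$. Hence (in $\mathbb{N}_0\cup\{\infty\}$, with the convention $1+\infty=\infty$)
\[
\widehat{T}(s) \;=\; T(\widehat{\sigma}^1,\widehat{\sigma}^2|s) \;=\; 1+T(\widehat{\sigma}^1,\widehat{\sigma}^2|s') \;=\; 1+\widehat{T}(s') \;=\; 1+\min_{u\in N(s)}\widehat{T}(u),
\]
the last step being the defining property $s'=\arg\min_{u\in N(s)}\widehat{T}(u)$ from Definition~\ref{prop03014}. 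The case $s\in S^2\setminus S_c$ is completely symmetric, using $\widehat{\sigma}^2$ and $\arg\max$.

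The only minor obstacle I anticipate is checking that the derivation is valid uniformly when $\widehat{T}(s)=\infty$: in that regime the $\arg\min$ is selected only through the fixed total order on $\overline{S}$ and the arithmetic convention $1+\infty=\infty$ must be invoked. However, if positional play from $s$ never reaches $S_c$, then positional play from $s'$ also never does, so $\widehat{T}(s')=\infty$, and because $s'$ is the $\arg\min$ every successor of $s$ must have infinite value, giving $\min_{u\in N(s)}\widehat{T}(u)=\infty$ and the equation $\infty=1+\infty$ automatically. No deeper work is required beyond this bookkeeping.
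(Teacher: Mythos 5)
Your proof is correct, and since the paper dismisses this proposition with the single word ``Straightforward,'' your argument is exactly the kind of filling-in that is needed: you first pin down $\widehat{T}(s)=T(\widehat{\sigma}^{1},\widehat{\sigma}^{2}|s)$ for the positional pair of Definition \ref{prop03014} via the saddle-point inequalities of Proposition \ref{prop03015}, then unroll one step using positionality and the $\arg\min$/$\arg\max$ defining property. This is the natural route given the preceding development, and your handling of the $\widehat{T}(s)=\infty$ case (with the convention $1+\infty=\infty$) closes the only point where carelessness could creep in.
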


\begin{proof}
Straightforward.
\end{proof}

\begin{proposition}
\label{prop03017}For every $n\in\mathbb{N}_{0}$we have%
\begin{equation}
\forall s\in\overline{S}:\widehat{T}\left(  s\right)  =n\Rightarrow\left\{
\begin{array}
[c]{l}%
\forall m<n:T^{m}\left(  s\right)  =\infty\\
\forall m\geq n:T^{n}\left(  s\right)  =\widehat{T}\left(  s\right)
\end{array}
\right.  \label{eq01}%
\end{equation}
where $\left(  \left(  T^{n}\left(  s\right)  \right)  _{s\in\overline{S}%
}\right)  _{n\in\mathbb{N}_{0}}$ are the quantities computed by the VL\ algorithm.
\end{proposition}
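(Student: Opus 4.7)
The plan is to prove, by induction on $m \in \mathbb{N}_0$, the following unified invariant:
\[
(\star_m):\quad \forall s \in \overline{S}:\ T^m(s) < \infty \Longleftrightarrow \widehat{T}(s) \leq m, \text{ and in that case } T^m(s) = \widehat{T}(s).
\]
Once $(\star_m)$ is established for all $m$, the proposition follows immediately: if $\widehat{T}(s) = n$, then for $m \geq n$ we have $\widehat{T}(s) \leq m$ and $(\star_m)$ yields $T^m(s) = \widehat{T}(s) = n$, while for $m < n$ we have $\widehat{T}(s) > m$ and $(\star_m)$ yields $T^m(s) = \infty$. Adopting the iff form is what makes the induction clean, since it bundles both halves of the conclusion and silently handles states with $\widehat{T}(s) = \infty$ (for which $T^m(s)$ must remain $\infty$ at every iteration), sidestepping the need for a separate auxiliary lemma.

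For the base case $m = 0$, lines $2$--$6$ of the VL algorithm give $T^0(s) = 0$ when $s \in S_c$ and $T^0(s) = \infty$ otherwise, while the optimality equations of Proposition \ref{prop03016} show that $\widehat{T}(s) = 0$ iff $s \in S_c$ (for any $s \notin S_c$ the recursion forces $\widehat{T}(s) \geq 1$). Thus $(\star_0)$ holds.

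For the inductive step, assume $(\star_m)$ and fix $s \in \overline{S}$. If $T^m(s) < \infty$, line $11$ of the algorithm gives $T^{m+1}(s) = T^m(s)$, and the IH yields $T^{m+1}(s) = \widehat{T}(s) \leq m < m+1$, as required. Otherwise $T^m(s) = \infty$ and hence $\widehat{T}(s) > m$, so $s \notin S_c$ and one of the min/max branches of the VL algorithm applies. For $s \in S^1 \setminus S_c$ we have
\[
T^{m+1}(s) = 1 + \min_{s' \in N(s)} T^m(s'), \qquad \widehat{T}(s) = 1 + \min_{s' \in N(s)} \widehat{T}(s'),
\]
the second equality coming from Proposition \ref{prop03016}. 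Applying $(\star_m)$ to each successor $s' \in N(s)$, the quantity $T^m(s')$ equals $\widehat{T}(s')$ when $\widehat{T}(s') \leq m$ and equals $\infty$ otherwise. Consequently $\min_{s'} T^m(s') < \infty$ iff some neighbor satisfies $\widehat{T}(s') \leq m$, iff $\min_{s'} \widehat{T}(s') \leq m$, iff $\widehat{T}(s) \leq m+1$; and whenever it is finite, the minimum is attained at a neighbor with $\widehat{T}(s') \leq m$ and so agrees with $\min_{s'} \widehat{T}(s')$, giving $T^{m+1}(s) = \widehat{T}(s)$. The case $s \in S^2 \setminus S_c$ is symmetric with $\max$: $T^{m+1}(s) < \infty$ iff every neighbor satisfies $\widehat{T}(s') \leq m$, iff $\max_{s'} \widehat{T}(s') \leq m$, iff $\widehat{T}(s) \leq m+1$, and in that case $T^{m+1}(s) = 1 + \max_{s'} \widehat{T}(s') = \widehat{T}(s)$. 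This closes the induction.

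The only subtlety is the translation between $\min/\max$ of $T^m(s')$ and of $\widehat{T}(s')$ when some successors may have $\widehat{T}(s') = \infty$. This is exactly what the iff form of $(\star_m)$ delivers: in the $S^1$ branch such successors contribute $\infty$ to the min and are harmlessly dominated as soon as a finite neighbor is is present, while in the $S^2$ branch a single such successor forces the max to $\infty$ in perfect agreement with the optimality equation.
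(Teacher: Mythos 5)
Your proof is correct, and it takes a genuinely different (though closely related) inductive route from the paper's. The paper inducts on the value $n=\widehat{T}(s)$: at stage $k+1$ it takes each $s$ with $\widehat{T}(s)=k+1$, uses the optimality equations to locate a neighbour $s_1$ with $\widehat{T}(s_1)=k$ and to bound the other neighbours, and then needs two separate computations -- one showing $T^{k+1}(s)=k+1$ and another (via the fact that all relevant neighbours still carry label $\infty$ at stage $k-1$) showing $T^{k}(s)=\infty$. You instead induct on the iteration counter $m$ with the biconditional invariant $T^{m}(s)<\infty\Leftrightarrow\widehat{T}(s)\leq m$ (with equality of values when finite). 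Both arguments lean on Proposition \ref{prop03016} in the same way, and the computational core -- comparing $\min/\max$ of $T^{m}$ over $N(s)$ with $\min/\max$ of $\widehat{T}$ over $N(s)$ -- is identical; but your packaging buys two things. First, the contrapositive of the forward implication automatically delivers ``$T^{m}(s)=\infty$ before the value is reached,'' which the paper must argue separately in its relations for $T^{k}(s)$. Second, the iff form correctly propagates $T^{m}(s')=\infty$ for successors with $\widehat{T}(s')=\infty$, which is exactly what is needed in the $S^{2}$ (max) branch when some successor is Evader-win; the paper's statement, being restricted to finite $n$, leaves that case implicit. Your handling of the $\min$ translation (the minimum of $T^{m}$ over neighbours is attained at a neighbour with $\widehat{T}\leq m$ whenever one exists, hence agrees with the minimum of $\widehat{T}$) is the one genuinely delicate point, and you address it explicitly and correctly. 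The argument is complete as written.
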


\begin{proof}
Clearly (\ref{eq01}) holds for $n=0$. Suppose it holds for all $n\in\left\{
1,2,...,k\right\}  $ and consider two cases.

\begin{enumerate}
\item Take any $s\in S^{1}$ such that $\widehat{T}\left(  s\right)  =k+1$.
Then
\[
\widehat{T}\left(  s\right)  =1+\min_{s^{\prime}\in N\left(  s\right)
}\widehat{T}\left(  s^{\prime}\right)  .
\]
Hence%
\[
\exists s_{1}\in N\left(  s\right)  :\widehat{T}\left(  s_{1}\right)  =k\text{
and }\not \exists s_{2}\in N\left(  s\right)  :\widehat{T}\left(
s_{2}\right)  <k\text{. }%
\]
Then from the inductive hypothesis it follows that%
\begin{align}
\exists s_{1}  &  \in N\left(  s\right)  :T^{k}\left(  s_{1}\right)
=k\text{,}\label{eq02}\\
\not \exists s_{2}  &  \in N\left(  s\right)  :T^{k}\left(  s_{2}\right)
<k\text{. } \label{eq03}%
\end{align}
From (\ref{eq02})-(\ref{eq03}) we have
\begin{equation}
k=\min_{s^{\prime}\in N\left(  s\right)  }T^{k}\left(  s^{\prime}\right)
\Rightarrow T^{k+1}\left(  s\right)  =k+1. \label{eq03040a}%
\end{equation}
From (\ref{eq03}) we also have
\begin{equation}
\left(  \forall s^{\prime}\in N\left(  s\right)  :T^{0}\left(  s^{\prime
}\right)  =...=T^{k-1}\left(  s^{\prime}\right)  =\infty\right)  \Rightarrow
T^{k}\left(  s\right)  =1+\infty=\infty. \label{eq03041}%
\end{equation}
From (\ref{eq03040a})-(\ref{eq03041})\ and lines 10-16 of the VL\ algorithm we
see that (\ref{eq01})\ holds for all $s\in S^{1}$ such that $\widehat
{T}\left(  s\right)  =k+1$.

\item Take any $s\in S^{2}$ such that $\widehat{T}\left(  s\right)  =k+1$.
Then
\[
\widehat{T}\left(  s\right)  =1+\max_{s^{\prime}\in N_{out}\left[  s\right]
}\widehat{T}\left(  s^{\prime}\right)  .
\]
Hence%
\[
\exists s_{1}\in N\left(  s\right)  :\widehat{T}\left(  s_{1}\right)  =k\text{
and }\forall s^{\prime}\in N\left(  s\right)  :\widehat{T}\left(  s^{\prime
}\right)  \leq k\text{. }%
\]
Then from the inductive hypothesis it follows that%
\begin{align}
\exists s_{1}  &  \in N\left(  s\right)  :T^{k}\left(  s_{1}\right)
=k\text{,}\label{eq04}\\
\forall s^{\prime}  &  \in N\left(  s\right)  :T^{k}\left(  s^{\prime}\right)
\leq k\text{. } \label{eq05}%
\end{align}
From (\ref{eq04})-(\ref{eq05}) we have
\begin{equation}
k=\max_{s^{\prime}\in N\left(  s\right)  }T^{k}\left(  s^{\prime}\right)
\Rightarrow T^{k+1}\left(  s\right)  =k+1. \label{eq03042}%
\end{equation}
From (\ref{eq04}) we also have
\begin{equation}
T^{k-1}\left(  s_{1}\right)  =\infty\Rightarrow T^{k}\left(  s\right)
=1+\infty=\infty. \label{eq03043}%
\end{equation}
From (\ref{eq03042})-(\ref{eq03043})\ and lines 10-16 of the VL\ algorithm we
see that (\ref{eq01})\ holds for all $s$ $\in S^{2}$ such that $\widehat
{T}\left(  s\right)  =k+1$.
\end{enumerate}

Hence (\ref{eq01})\ holds for all $s$ $\in\overline{S}$ such that $\widehat
{T}\left(  s\right)  =k+1$ and the proof is completed.
\end{proof}

\section{Comparison to Other Approaches\label{sec04}}

Our analysis of Section \ref{sec0301} is heavily inspired by
\cite{Hahn2006,Bonato2017}. We will now discuss these two papers and also the
less well known \cite{Berarducci1993} in comparison to our own. Let us
emphasize that, while we will criticize some aspects of
\cite{Hahn2006,Bonato2017,Berarducci1993}, we find these papers extremely
useful; they have provided the inspiration and foundation for our own more
detailed approach.

\subsection{Hahn and MacGillivray}

In \cite{Hahn2006} Hahn and MacGillivray study a CR\ version with two
generalizations of the classic game:\ (i)\ the game is played on a directed
graph and (ii)\ more than one cops and/or robbers (\textquotedblleft$k$-cop,
$l$-robber\textquotedblright)\ may be involved\footnote{Let us stress that
they still deal with a two-player game:\ there is a single Cop player and a
single Robbber player, but each can control one or more cop and robber
\emph{tokens}.}. On the other hand, following the classic CR\ formulation,
they count time (especially capture time) in \emph{rounds}; one round includes
one move by each cop and robber token. While we consider the single Cop and
single Robber case, our own formulation can easily accommodate all of the above.

Next we describe two more substantial differences between Hahn and
MacGillivray's approach and our own. These are really differences between the
games being studied in each case. Namely, in \cite{Hahn2006}:

\begin{enumerate}
\item it is assumed that the two players move alternately and the game always
starts with the Cop moving first (once again this follows the classic CR game formulation);

\item the game starts with an empty graph, the Cop's first move is to place
his token on some vertex and the Robber's first move is to place his own
token; these two moves constitute the \textquotedblleft\emph{placement
round}\textquotedblright.
\end{enumerate}

However both of the above differences can be easily accommodated by our
approach. Obviously, removing the \textquotedblleft alternating
moves\textquotedblright\ assumption makes our analysis more general. To
accommodate the \textquotedblleft placement round\textquotedblright, we can
use a one-round game which consists of two turns:\ first the Cop chooses a
vertex $x_{0}^{1}$, then the Robber chooses a vertex $x_{0}^{2}$ and then the
\ Robber gains (the Cop loses) $\widehat{T}\left(  x_{0}^{1},x_{0}%
^{2},1\right)  $\ payoff units, where $\widehat{T}\left(  x_{0}^{1},x_{0}%
^{2},1\right)  $\ has been computed for every vertex pair $\left(  x_{0}%
^{1},x_{0}^{2}\right)  $ by the VL algorithm. Clearly the new game has a value
which is
\[
\min_{x_{0}^{1}}\max_{x_{0}^{2}}\widehat{T}\left(  x_{0}^{1},x_{0}%
^{2},1\right)  .
\]
Hence the solution of our GCR\ game $\ $ also provides the solution to Hahn
and MacGillivray's (classic) CR\ game.

One of the main components of \cite{Hahn2006} is a vertex labeling \ algorithm
very similar to our own, which is used to compute optimal capture times
(counted in \emph{rounds}) and strategies. The main properties of this
algorithm are established in Lemma 4 of \cite{Hahn2006}. We find the proof of
this Lemma not quite rigorous, because precise definitions of \emph{strategy}
and \emph{optimality} (and also \emph{value}) are not provided.

Both \textquotedblleft\emph{strategy}\textquotedblright\ and \textquotedblleft%
\emph{optimal strategy}\textquotedblright\ are used informally in
\cite{Hahn2006}. \textquotedblleft\emph{Strategy}\textquotedblright\ is not
defined. \textquotedblleft\emph{Optimal strategy}\textquotedblright\ for the
Cop is defined informally as follows:\ \textquotedblleft a strategy from a
configuration $c_{xy}$ [is] \emph{optimal} for the cop if no other strategy
gives a win in fewer moves\textquotedblright. But this is incorrect (indeed
there exist Cop strategies which give capture time \emph{better than optimal},
but only for \emph{some} robber strategies) because it does not take into
account the Robber's strategy. Similar remarks can be made regarding the
informal definition of optimal Robber strategies.

To precisely define \textquotedblleft\emph{optimal strategy}\textquotedblright%
\ one must first define \textquotedblleft\emph{strategy}\textquotedblright%
\ (as a function from histories to moves)\ and then provide an optimality
criterion. In Game Theory optimality is defined in connection with
\textquotedblleft game value\textquotedblright\ (which is neither defined nor
used in \cite{Hahn2006}). In the context of CR\ and GCR the appropriate
definitions are Definitions \ref{prop02016} and \ref{prop02017} as given in
our Section \ref{sec0203}.

Consequently, while the main ideas in the proof of Lemma 4 are correct, their
elaboration is not always rigorous (in our opinion). But in some cases the
authors' arguments can be improved quite easily. For instance, their statement
\textquotedblleft\emph{the cop's move will be to an }$x^{\prime}$\emph{ ...
from which, by the induction hypothesis, the cop can win in }$t-1$\emph{
rounds}\textquotedblright\ should be augmented by: \textquotedblleft\emph{no
matter how the robber plays}\textquotedblright\footnote{Of course this is just
a verbal description of the $\sup$ and $\inf$ conditions of our Definition
\ref{prop02016}.}. Similar remarks apply to other parts of \cite{Hahn2006}.

\subsection{Bonato and MacGillivray}

As already stated, our main inspiration is \cite{Bonato2017}, in which Bonato
and MacGillivray generalize the games and results of \cite{Hahn2006}. In place
of \textquotedblleft Cop\textquotedblright\ and \textquotedblleft
Robber\textquotedblright, they use the terms \textquotedblleft
Pursuer\textquotedblright\ and \textquotedblleft Evader\textquotedblright.
\textquotedblleft Alternating moves\textquotedblright\ and \textquotedblleft
placement round\textquotedblright\ are used in the same manner as in
\cite{Hahn2006}. On the other hand capture is understood in a more general
sense; slightly paraphrasing \cite{Bonato2017}, the Pursuer wins if, at any
time-step, the current position of the game belongs to the subset of
\emph{final positions}. Of course this is exactly analogous to our capture set
$S_{c}$.

A vertex labeling\ algorithm is also provided in \cite{Bonato2017}; it counts
time in turns (not rounds) and is essentially the same as our own
VL\ Algorithm\footnote{There is one caveat:\ it is never specified in
\cite{Bonato2017} whether once a state achieves a finite label can be
subsequently relabeled (it should not); this is probably an oversight in the
description.}. However, rather than proving directly the properties of their
algorithm, the authors proceed in the following manner.

\begin{enumerate}
\item The construct, independently of the labeling algorithm, a sequence of
orderings $\preceq_{0}$, $\preceq_{1}$, ... on Pursuer and Evader
\emph{positions}.

\item They prove that these converge to an ordering $\preceq$.

\item They relate winning and \textquotedblleft optimal\textquotedblright%
\ game duration to $\left(  \preceq_{i}\right)  _{i\in\mathbb{N}_{0}}$ and
$\preceq$ (their Theorem 3.1 and Corollary 3.2).

\item Finally they relate state labels to the orderings $\left(  \preceq
_{i}\right)  _{i\in\mathbb{N}_{0}}$ (Theorem 3.3)
\end{enumerate}

Hence their vertex labeling\ \ algorithm is peripheral, rather than central to
the arguments of \cite{Bonato2017}. Nevertheless, the criticisms addressed to
\cite{Hahn2006} can also be addressed to \cite{Bonato2017}. Namely,
\textquotedblleft strategy\textquotedblright, \textquotedblleft
value\textquotedblright\ and \textquotedblleft optimality\textquotedblright%
\ are used but not rigorously defined. An informal definition of optimality is
that an \textquotedblleft the Pursuer's optimal strategy is to move so that
the game is over as quickly as possible, and the Evader's optimal strategy is
to move so the game lasts as long as possible\textquotedblright; similarly to
\cite{Hahn2006}, this definition does not clarify the role of the
\textquotedblleft other\textquotedblright\ player's strategy. A correct verbal
description would be:\ \textquotedblleft the Pursuer's optimal strategy is to
move so that the \emph{longest possible} duration of the game is as short as
possible\textquotedblright\ (and a similar modification should be appplied to
the definition of the Evader's optimal strategy). Now, the above are simply
verbal descriptions of the $\inf_{\sigma^{1}}\sup_{\sigma^{2}}$ and
$\sup_{\sigma^{2}}\inf_{\sigma^{1}}$ conditions on capture time \emph{and they
do not suffice to ensure optimality} (we must have in addition that
$\inf_{\sigma^{1}}\sup_{\sigma^{2}}$ equals $\sup_{\sigma^{2}}\inf_{\sigma
^{1}}$, as in Definition \ref{prop02016}).

\subsection{Berarducci and Intrigila}

The earliest investigation of optimal CR\ strategies that we know of is the
one presented in \cite{Berarducci1993} by Berarducci and Intrigila. As we will
explain below, this work provides a very useful approach to the CR\ problem.

Berarducci and Intrigila \emph{do} provide a definition of strategies, both
general and positional. In their Remark 2.2 they apparently assume implicitly
that an optimal solution can be found by considering only positional
strategies but they actually justify (post facto) this assumption.

Interestingly, the results of \cite{Berarducci1993} are established by using a
sequence of sets $W_{0}$, $W_{1}$, ... (rather than a labeling algorithm). The
sequence is defined inductively: their $W_{0}$ is our capture sets $S_{c}$
and, for each $n$, $W_{n}$ is defined (their Definition 2.4)\ in a manner
which strongly resembles our VL\ Algorithm. The subsequent arguments
(contained in their Lemmas 2.5-2.7)\ resemble the analysis of our
VL\ Algorithm. Their main results are the following.

\begin{enumerate}
\item The set $W_{n}$ is the set of all these starting states from which the
Cop \emph{can} capture the Robber in $n$ moves or less (their Lemma 2.5).

\item The sequence $W_{n}$ converges to a state set $W$ which has the
following property:\ for every starting state $s\in W$ the Cop \emph{can}
capture the Robber in a finite number of moves; for every starting state
$s\notin W$ the Cop \emph{cannot} capture the Robber in a finite number of
moves (their Lemma 2.6).

\item Optimal Cop and Robber strategies are also defined in the proof of Lemma
2.6 and they are, by their definition, positional.
\end{enumerate}

While in the above results Berarducci and Intrigila make no explicit mention
of the \textquotedblleft other player's\textquotedblright\ strategy, the use
of \textquotedblleft can\textquotedblright\ implies that \emph{the Cop has a
strategy which guarantees capture no matter how the Robber plays}. Similarly,
the use of \textquotedblleft cannot\textquotedblright\ implies that \emph{the
Robber has a strategy which guarantees noncapture no matter how the Cop plays}.

The proof of the above results is correct. In our understanding, the important
quantities are not the sets $W_{n}$ but the sets $U_{n}=W_{n}\backslash
W_{n-1}$. While not explicitly stated, it follows from their proof that
$U_{n}$ is the set of initial states from which

\begin{enumerate}
\item the Cop \emph{can} capture the Robber in \emph{at most }$n$ rounds,
\emph{no matter how the Robber plays};

\item but the Robber \emph{can} delay capture for \emph{at least} $n-1$
rounds, \emph{no matter how the Cop plays};
\end{enumerate}

In short, the analysis of \cite{Berarducci1993} respects, at least implicitly,
all the relevant game theoretic considerations. It is also closely related to
the previously mentioned vertex labeling algorithms. For example, it is easy
to prove that, reverting to our own terminology, the state $s$ belongs to
$U_{n}$ iff $\widehat{T}\left(  s\right)  =n$.

\section{Concluding Remarks\label{sec05}}

We have presented two full game theoretic solutions of the GCR\ game. Let us
briefly comment on each one.

Our first solution, presented in Section \ref{sec0301}, is self-contained and
follows closely \cite{Hahn2006} and \cite{Bonato2017}. The main tool for this
solution is the VL\ algorithm which is based on similar algorithms introduced
in \cite{Hahn2006,Bonato2017}. Our main contribution in Section \ref{sec0301}
is to present in greater detail and precision certain implicit assumptions of
\cite{Hahn2006,Bonato2017}.

Our second solution, presented in Section \ref{sec0302}, is also fully game
theoretic but is not totally self-contained (it invokes Von Neumann's MinMax
Theorem). It is also longer. The main reason we have presented it is that it
brings into the foreground certain aspects of the CR\ and GCR\ game which are
usually overlooked.

We conclude the current paper by listing (i)\ further generalizations of the
GCR\ game and (ii)\ well known families of games which contain GCR\ as a
special case.

\bigskip

\noindent\underline{\textbf{Generalizations of GCR}}. The GCR\ game, as
presented in both \cite{Bonato2017} and the current paper is a perfect
information, two-person, zero-sum game. All of these aspects can be generalized.

\begin{enumerate}
\item \underline{\emph{Concurrent GCR}}. A standard assumption of both classic
CR\ and Bonato and MacGillivray's GCR\ is that a single player moves in each
turn of the game. An obvious generalization is to allow both players to move
concurrently. In this case the game no longer has perfect information. It
still has a value which, however, will in genral be achieved by randomized
optimal strategies. An exploration in this direction appears in \cite{CCR}.

\item \underline{\emph{Nonzero-sum GCR}}. By modifying the payoff function we
can obtain a two-player non-zero sum CR\ game. For example, introducing
\textquotedblleft energy cost\textquotedblright, the Robber's payoff could be
the capture time minus the distance he has traveled and the Cop's payoff could
be the negative of the sum of capture time and the distance he has traveled.
An example of a two-player, nonzero-sum GCR\ game has been presented in
\cite{SCPR}; it involves two \emph{selfish }Cop players who attempt to catch a
\textquotedblleft passive\textquotedblright\ robber; by \textquotedblleft
passive\textquotedblright\ we mean that the robber is not controlled by a
player but follows, instead, a predetermined path, known to both cop players.

\item \underline{\emph{Multi-player GCR}}. Both the classic CR\ game and
practically all its published variants are \emph{two}-player, \emph{zero}%
-sum\ games; the same holds for the GCR of \cite{Bonato2017}. While such games
may involve more than one Pursuer, all Pursuers are controlled by a single
player whose payoff is given by single function. On the other hand, in
\cite{SCAR}\ we have studied an $N$-player (with $N\geq2$), \emph{nonzero}%
-sum\ version of the classic CR, the so-called \emph{Selfish Cops and
Adversarial Robber} (SCAR) game. As the name indicates, SCAR\ involves several
\emph{selfish} cops, each controlled by a separate player. All cop players
share the goal of catching the robber but each cop player has his own payoff
function which assigns a higher reward to the player who actually effects the
capture; the robber player wants, as in the classic CR\ game, to delay capture
as long as possible. We have generalized this approach in \cite{GenPurEv},
where we have introduced $N$\emph{-player Generalized CR\ Games}.
\end{enumerate}

\bigskip

\noindent\underline{\textbf{Additional Game Families}}

\begin{enumerate}
\item \underline{\emph{Stochastic Games}}. The above presented GCR\ games can
be formulated and as stochastic games. Using standard stochastic game results
\cite{filar1996} the following things can be shown for every GCR\ game:\ 

\begin{enumerate}
\item if it is zero-sum, it possesses a value and optimal strategies, which
can be computed by the \emph{Value Iteration Algorithm}, a generalization of
the VL\ algorithm presented in this paper;

\item if it is nonzero-sum, it possesses at least one \emph{Nash Equilibrium}
in deterministic positional strategies \cite{SCAR,GenPurEv}.
\end{enumerate}

\item \underline{\emph{Reachability games}}. The two-player, zero-sum
GCR\ game (understood in the sense of either the current paper or
\cite{Bonato2017}) can also be seen as a special type of \emph{reachability
game} \cite{Mazala,Berwanger}. In a reachability game the first player's
objective is to bring the game to a target state and the second player's
objective is to keep the game away from all target states. This is very
similar to the GCR\ game except that no assumption is made regarding the
players' locations. Indeed, a reachability game can be represented by a tuple
$\left(  \mathbf{N},S^{1},S^{2},S\right)  $ where $\mathbf{N}$ represents an
abstract collection of successor states. Our solution of the GCR\ game can be
applied to any reachability game. However the \textquotedblleft
usual\textquotedblright\ way to solve a reachability game is by constructing a
sequence of \emph{attractor sets}; this approach is practically identical to
the one used in \cite{Berarducci1993} to solve the classic CR\ game.

\item \underline{\emph{Graphical Games}}. Reachability games are perhaps the
simplest example of \emph{infinite perfect information games }%
\cite{Berwanger,Gradel,Mazala,Ummels} which can also be understood as games in
which two or more players move a token along the edges of a graph (hence the
term \textquotedblleft\emph{graphical games}\textquotedblright). Various
\emph{infinitary} winning conditions can be used which, in general, depend on
some property of the entire game history (for example:\ Player 1 wins if a
certain state is visited infinitely often). In the most general setting we can
have games with any number of players and nonzero-sum winning conditions.
\end{enumerate}

\newpage

\appendix

\section{Some Facts about Zero-Sum Games\label{secA}}

Here we present the general statements of several game theoretic definitions
and propositions (these were presented in GCR-specific form in Section
\ref{sec0203}). All of the following definitions and propositions refer to a
general two-player zero-sum game $\Gamma$ (i.e., they are not specific to the
games we discuss in the main body of the paper). The game is assumed to have a
payoff to $P^{2}$ (the \emph{maximizer}) equal to $U\left(  \sigma^{1}%
,\sigma^{2}\right)  $ where $\sigma^{n}$ is the strategy used by $P^{n}$
($n\in\left\{  1,2\right\}  $); the payoff to $P^{1}$ (the \emph{minimizer})
is equal to $-U\left(  \sigma^{1},\sigma^{2}\right)  $. Proofs can be found in
\cite{Maschler,Thomas}.

\begin{definition}
We define the following two quantities%
\begin{align*}
\text{lower value of }\Gamma &  :U^{-}\left(  s\right)  =\sup_{\sigma^{2}}%
\inf_{\sigma^{1}}U\left(  \sigma^{1},\sigma^{2}\right)  ,\\
\text{upper value of }\Gamma &  :U^{+}\left(  s\right)  =\inf_{\sigma^{1}}%
\sup_{\sigma^{2}}U\left(  \sigma^{1},\sigma^{2}\right)  .
\end{align*}

\end{definition}

\begin{proposition}
We always have
\[
U^{-}=\sup_{\sigma^{2}}\inf_{\sigma^{1}}U\left(  \sigma^{1},\sigma^{2}\right)
\leq\inf_{\sigma^{1}}\sup_{\sigma^{2}}U\left(  \sigma^{1},\sigma^{2}\right)
=U^{+}.
\]

\end{proposition}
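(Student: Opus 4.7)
The plan is to establish the inequality via the elementary observation that any particular pair of strategies produces a value sandwiched between its inf over the minimizer's choices and its sup over the maximizer's choices, and then to propagate this inequality through two successive applications of $\sup$ and $\inf$.

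First, I would fix arbitrary strategies $\sigma_0^1$ and $\sigma_0^2$ and note the two trivial bounds
\[
\inf_{\sigma^1} U(\sigma^1,\sigma_0^2) \;\leq\; U(\sigma_0^1,\sigma_0^2) \;\leq\; \sup_{\sigma^2} U(\sigma_0^1,\sigma^2),
\]
which hold because any fixed value of a function is at least its infimum and at most its supremum. Chaining these gives
\[
\inf_{\sigma^1} U(\sigma^1,\sigma_0^2) \;\leq\; \sup_{\sigma^2} U(\sigma_0^1,\sigma^2).
\]

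Next I would observe that the left-hand side depends only on $\sigma_0^2$ while the right-hand side depends only on $\sigma_0^1$, so the two sides are independent variables of one another. Taking the supremum over $\sigma_0^2$ on the left (which is valid because the right-hand side is a constant with respect to $\sigma_0^2$) gives
\[
\sup_{\sigma^2}\inf_{\sigma^1} U(\sigma^1,\sigma^2) \;\leq\; \sup_{\sigma^2} U(\sigma_0^1,\sigma^2)
\]
for every $\sigma_0^1$. Now the left-hand side no longer depends on $\sigma_0^1$, so taking the infimum over $\sigma_0^1$ on the right yields
\[
\sup_{\sigma^2}\inf_{\sigma^1} U(\sigma^1,\sigma^2) \;\leq\; \inf_{\sigma^1}\sup_{\sigma^2} U(\sigma^1,\sigma^2),
\]
which is precisely $U^{-} \leq U^{+}$.

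There is no substantive obstacle here; the only point requiring a moment's care is the order in which the extrema are taken, and in particular justifying that one may apply $\sup_{\sigma_0^2}$ and then $\inf_{\sigma_0^1}$ to the chained inequality without disturbing its direction. This is immediate once one notes that both sup and inf are monotone operations on pointwise inequalities. The argument requires no structural assumptions on the strategy spaces or on $U$ beyond the bare existence of the sup and inf (with values in the extended reals), which is why the conclusion holds in the generality in which it is stated.
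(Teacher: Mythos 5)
Your proof is correct and is the standard argument for the max--min inequality; the paper itself omits the proof of this proposition, deferring to the cited references, which use exactly this chain of pointwise bounds followed by monotone applications of $\sup$ and $\inf$. Nothing to add.
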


\begin{definition}
We say that $\overline{\sigma}^{1}$ is \emph{minmax strategy} (for $P^{1}$)
iff
\[
\forall\sigma^{2}:U\left(  \overline{\sigma}^{1},\sigma^{2}\right)
=U^{+}=\inf_{\sigma^{1}}\sup_{\sigma^{2}}U\left(  \sigma^{1},\sigma
^{2}\right)  .
\]
We say that $\overline{\sigma}^{2}$ is \emph{maxmin strategy} (for $P^{2}$)
iff%
\[
\forall\sigma^{1}:U\left(  \sigma^{1},\overline{\sigma}^{2}\right)
=U^{-}=\sup_{\sigma^{2}}\inf_{\sigma^{1}}U\left(  \sigma^{1},\sigma
^{2}\right)  .
\]

\end{definition}

\begin{proposition}
For every minmax strategy $\overline{\sigma}^{1}$ we have%
\[
\forall\sigma^{1},\sigma^{2}:U\left(  \overline{\sigma}^{1},\sigma^{2}\right)
\leq\sup_{\sigma^{2}}U\left(  \sigma^{1},\sigma^{2}\right)  .
\]
For every maxmin strategy $\overline{\sigma}^{2}$ we have%
\[
\forall\sigma^{1},\sigma^{2}:U\left(  \sigma^{1},\overline{\sigma}^{2}\right)
\geq\inf_{\sigma^{1}}U\left(  \sigma^{1},\sigma^{2}\right)  .
\]

\end{proposition}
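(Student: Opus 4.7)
The plan is to prove each inequality by a short chain of inequalities that just combines the definition of minmax (resp.\ maxmin) strategy with the definitions of upper and lower value together with the trivial bounds $f(x)\le\sup_x f(x)$ and $f(x)\ge\inf_x f(x)$. Nothing beyond these ingredients is needed, since the statement is a direct ``downgrading'' of the minmax/maxmin equalities to a pair of pointwise bounds.

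For the first inequality, I would fix arbitrary $\sigma^1,\sigma^2$ and argue as follows. By the definition of a minmax strategy $\overline{\sigma}^1$ we have $U(\overline{\sigma}^1,\sigma^2)\le\sup_{\sigma^2}U(\overline{\sigma}^1,\sigma^2)=U^+$, where the supremum bound is a triviality and the equality is the defining property of $\overline{\sigma}^1$. Next, by the definition of the upper value and the trivial bound of an infimum by any particular value, $U^+=\inf_{\sigma^1}\sup_{\sigma^2}U(\sigma^1,\sigma^2)\le\sup_{\sigma^2}U(\sigma^1,\sigma^2)$ for the fixed $\sigma^1$. Concatenating these two inequalities gives $U(\overline{\sigma}^1,\sigma^2)\le\sup_{\sigma^2}U(\sigma^1,\sigma^2)$, which is exactly the claim.

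The second inequality is proved by the dual argument. Fix any $\sigma^1,\sigma^2$; by the definition of a maxmin strategy $\overline{\sigma}^2$ we have $U(\sigma^1,\overline{\sigma}^2)\ge\inf_{\sigma^1}U(\sigma^1,\overline{\sigma}^2)=U^-$, and by the definition of the lower value $U^-=\sup_{\sigma^2}\inf_{\sigma^1}U(\sigma^1,\sigma^2)\ge\inf_{\sigma^1}U(\sigma^1,\sigma^2)$ for the fixed $\sigma^2$. Chaining yields $U(\sigma^1,\overline{\sigma}^2)\ge\inf_{\sigma^1}U(\sigma^1,\sigma^2)$.

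There is really no main obstacle here; the only thing worth being careful about is the quantifier bookkeeping, since $\sigma^2$ is used both as a bound variable in the $\sup$ and as a free variable in the left-hand side of the first inequality (and symmetrically for $\sigma^1$ in the second). I would make the argument crisp by writing the chains as two-step displays and explicitly naming, at each step, which of the two defining identities (of the minmax/maxmin strategy or of the upper/lower value) is being invoked. Since no other facts are required, the entire proof fits in a few lines.
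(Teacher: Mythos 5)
Your proof is correct: each chain combines the defining equality of the minmax (resp.\ maxmin) strategy with the trivial bounds $U(\overline{\sigma}^{1},\sigma^{2})\leq\sup_{\sigma^{2}}U(\overline{\sigma}^{1},\sigma^{2})$ and $\inf_{\sigma^{1}}\sup_{\sigma^{2}}U(\sigma^{1},\sigma^{2})\leq\sup_{\sigma^{2}}U(\sigma^{1},\sigma^{2})$ (and their duals), and the quantifier bookkeeping you flag is handled properly. The paper itself gives no proof of this appendix proposition (it defers to the cited textbooks), so there is nothing to compare against; your argument is the standard one and works verbatim with the paper's definitions, even in the extended reals where payoffs may be infinite.
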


\begin{definition}
We say that $\Gamma$ has a \emph{value} $\widehat{U}\ $iff
\[
U^{-}=\sup_{\sigma^{2}}\inf_{\sigma^{1}}U\left(  \sigma^{1},\sigma^{2}\right)
=\inf_{\sigma^{1}}\sup_{\sigma^{2}}U\left(  \sigma^{1},\sigma^{2}\right)
=U^{+}.
\]
in which case we define%
\[
\widehat{U}=\sup_{\sigma^{2}}\inf_{\sigma^{1}}U\left(  \sigma^{1},\sigma
^{2}\right)  =\inf_{\sigma^{1}}\sup_{\sigma^{2}}U\left(  \sigma^{1},\sigma
^{2}\right)  .
\]

\end{definition}

\begin{definition}
If $\Gamma$ has a value $\widehat{U}\ $ and $\widehat{\sigma}^{1}$ (resp.
$\widehat{\sigma}^{2}$) is a minmax (resp. maxmin)\ strategy, then we call
$\widehat{\sigma}^{1}$ (resp. $\widehat{\sigma}^{2}$) an \emph{optimal
strategy} for $P^{1}$ (resp. for $P^{2}$) and we also say that $\left(
\widehat{\sigma}^{1},\widehat{\sigma}^{2}\right)  $ is an \emph{optimal
strategy pair}.
\end{definition}

\begin{proposition}
Every \emph{finite} (two player)\ zero-sum game has a value $\widehat{U}$ and
an optimal strategy pair $\left(  \widehat{\sigma}^{1},\widehat{\sigma}%
^{2}\right)  $, for which the following hold%
\[
\widehat{U}=U\left(  \widehat{\sigma}^{1},\widehat{\sigma}^{2}\right)
=\sup_{\sigma^{2}}\inf_{\sigma^{1}}U\left(  \sigma^{1},\sigma^{2}\right)
=\inf_{\sigma^{1}}\sup_{\sigma^{2}}U\left(  \sigma^{1},\sigma^{2}\right)  .
\]

\end{proposition}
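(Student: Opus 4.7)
The plan is to prove the classical von Neumann minimax theorem. Since a finite matrix game need not have a value in pure strategies (consider matching pennies), the first move is to pass from the finite pure-strategy sets $\Sigma^1 = \{\sigma^1_1,\ldots,\sigma^1_m\}$ and $\Sigma^2 = \{\sigma^2_1,\ldots,\sigma^2_n\}$ to their mixed extensions, the probability simplices $\Delta^1 \subset \mathbb{R}^m$ and $\Delta^2 \subset \mathbb{R}^n$, and to extend $U$ bilinearly via $\bar{U}(p,q) = \sum_{i,j} p_i q_j \, U(\sigma^1_i, \sigma^2_j) = p^{T} A q$, where $A_{ij} = U(\sigma^1_i, \sigma^2_j)$. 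A short observation is that the pure-strategy upper and lower values coincide with their mixed counterparts, because for each fixed $p$ the map $q \mapsto \bar{U}(p,q)$ is linear on $\Delta^2$ and therefore attains its maximum at a vertex, and symmetrically for $q$ fixed; hence it suffices to prove the theorem in the mixed-strategy formulation.

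Next I would establish attainability of the upper and lower values. The function $p \mapsto \max_{j} (p^{T}A)_j$ is a pointwise maximum of finitely many linear functions, hence continuous and convex on the compact simplex $\Delta^1$, so it attains its minimum at some $\widehat{p}$; analogously $q \mapsto \min_{i}(Aq)_i$ is concave and continuous and attains its maximum at some $\widehat{q}$. These are the candidate optimal strategies. What remains is the central identity
\[
v^{-} \;:=\; \max_{q \in \Delta^2} \min_{p \in \Delta^1} \bar{U}(p,q) \;=\; \min_{p \in \Delta^1} \max_{q \in \Delta^2} \bar{U}(p,q) \;=:\; v^{+},
\]
since by the analogue of Proposition A.2 we already have $v^{-} \le v^{+}$.

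The main obstacle is proving $v^{-} \ge v^{+}$, and this is where I would invoke a separation argument (equivalently, Farkas' lemma or LP duality). Specifically, set $v^{+} = c$ and consider the two convex sets $K_1 = \{Aq : q \in \Delta^2\} \subset \mathbb{R}^m$ and $K_2 = \{x \in \mathbb{R}^m : x_i < c \text{ for all } i\}$. By the definition of $c$, no $Aq$ can lie strictly below $c$ in every coordinate (otherwise some $p$ would achieve $\max_q p^{T}Aq < c$), so $K_1 \cap K_2 = \emptyset$. The separating hyperplane theorem then yields a nonzero $p \in \mathbb{R}^m_{\ge 0}$ (nonnegativity comes from the fact that $K_2$ contains arbitrarily negative points in each coordinate) and a constant such that $p^{T}Aq \ge c$ for all $q \in \Delta^2$; normalizing gives $p \in \Delta^1$, and then $\min_q p^{T}Aq \ge c$, so $v^{-} \ge c = v^{+}$. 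Once equality is established, setting $\widehat{U} = v^{-} = v^{+}$, the inequalities $\bar{U}(\widehat{p},q) \le \widehat{U}$ for all $q$ and $\bar{U}(p,\widehat{q}) \ge \widehat{U}$ for all $p$ (from the definitions of $\widehat{p},\widehat{q}$ as minmax/maxmin strategies) immediately give $\widehat{U} = \bar{U}(\widehat{p},\widehat{q})$, completing the proof. The truly nontrivial ingredient is the separation step; everything else is compactness, bilinearity, and bookkeeping.
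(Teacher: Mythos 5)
Your overall plan---pass to the mixed extension, get attainment of the upper and lower values from compactness, and prove the hard inequality $v^{-}\ge v^{+}$ by a separating-hyperplane argument---is the standard route to von Neumann's theorem (the paper itself does not prove this proposition; it only cites the references in the appendix). However, your separation step is broken in two independent places. First, the disjointness claim $K_{1}\cap K_{2}=\emptyset$ is false in general, and your parenthetical justification does not establish it: if some $Aq_{0}$ lies strictly below $c$ in every coordinate, this controls only the payoff against the single column mixture $q_{0}$ and says nothing about $\max_{q}p^{T}Aq$. Concretely, take
\[
A=\begin{pmatrix}1&-1&-2\\-1&1&-2\end{pmatrix},
\]
where $P^{1}$ mixes over the two rows and $P^{2}$ (the maximizer) over the three columns. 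Here $c=v^{+}=0$, attained at $p=(\tfrac12,\tfrac12)$, yet $q_{0}=(0,0,1)$ gives $Aq_{0}=(-2,-2)\in K_{2}$; a strictly dominated column whose payoff vector sits entirely below $c$ is perfectly consistent with $v^{+}=c$. Second, even where the separation does go through, the object it yields is a \emph{row} mixture $p\in\Delta^{1}$ with $p^{T}Aq\ge c$ for all $q$, i.e. $\min_{q}p^{T}Aq\ge c$. That is a statement about what the minimizer concedes when he plays $p$; it does not imply $v^{-}=\max_{q}\min_{p}p^{T}Aq\ge c$, which requires exhibiting a \emph{column} mixture $\widehat{q}$ with $\min_{i}(A\widehat{q})_{i}\ge c$. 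So the last inference is a non sequitur even granting the (false) disjointness.

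The repair is to aim the separation at the upper orthant and run it in the contrapositive: suppose $K_{1}=\{Aq:q\in\Delta^{2}\}$ is disjoint from the closed convex set $C=\{x\in\mathbb{R}^{m}:x_{i}\ge c\text{ for all }i\}$. Strict separation of the compact convex $K_{1}$ from $C$ forces the separating functional to be nonnegative (since $C$ is unbounded above in every coordinate); normalizing it to some $p\in\Delta^{1}$ gives $\max_{q}p^{T}Aq<c$, contradicting $c=\min_{p}\max_{q}p^{T}Aq$. Hence $K_{1}\cap C\ne\emptyset$, and any $\widehat{q}$ with $A\widehat{q}\ge c$ coordinatewise witnesses $v^{-}\ge c=v^{+}$. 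You should also restate your ``short observation'': what your vertex argument actually shows is that the \emph{inner} optimizations may be restricted to pure strategies; the fully pure upper and lower values do \emph{not} coincide with the mixed ones (matching pennies again). Finally, note that where the paper actually invokes this proposition (Proposition \ref{prop03011}, a finite game of \emph{perfect information} with deterministic optimal strategies), the relevant elementary argument is backward induction on the finite game tree, which avoids mixed strategies and separation altogether.
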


\begin{definition}
We say that $\left(  \widehat{\sigma}^{1},\widehat{\sigma}^{2}\right)  $ is a
\emph{Nash Equilbrium } (NE)\ of $\Gamma$ iff%
\begin{align*}
\forall\sigma^{1}  &  :U\left(  \widehat{\sigma}^{1},\widehat{\sigma}%
^{2}\right)  \leq U\left(  \sigma^{1},\widehat{\sigma}^{2}\right)  ,\\
\forall\sigma^{2}  &  :U\left(  \widehat{\sigma}^{1},\widehat{\sigma}%
^{2}\right)  \geq U\left(  \widehat{\sigma}^{1},\sigma^{2}\right)  .
\end{align*}

\end{definition}

\noindent The next proposition says:\ $\left(  \widehat{\sigma}^{1}%
,\widehat{\sigma}^{2}\right)  $ is an optimal strategy pair iff it is a
$\emph{Nash\ Equilbrium}$.

\begin{proposition}
The following two conditions are equivalent.%
\begin{align}
&  U\left(  \widehat{\sigma}^{1},\widehat{\sigma}^{2}\right)  =\sup
_{\sigma^{2}}\inf_{\sigma^{1}}U\left(  \sigma^{1},\sigma^{2}\right)
=\inf_{\sigma^{1}}\sup_{\sigma^{2}}U\left(  \sigma^{1},\sigma^{2}\right)
\tag{\textbf{C1}}\\
&  \forall\sigma^{1}:U\left(  \widehat{\sigma}^{1},\widehat{\sigma}%
^{2}\right)  \leq U\left(  \sigma^{1},\widehat{\sigma}^{2}\right)  \text{ and
}\forall\sigma^{2}:U\left(  \widehat{\sigma}^{1},\widehat{\sigma}^{2}\right)
\geq U\left(  \widehat{\sigma}^{1},\sigma^{2}\right)  . \tag{\textbf{C2}}%
\end{align}

\end{proposition}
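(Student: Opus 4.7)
The plan is to establish each implication separately using routine manipulations of $\sup$ and $\inf$ together with the universal inequality $\sup_{\sigma^{2}}\inf_{\sigma^{1}}U \le \inf_{\sigma^{1}}\sup_{\sigma^{2}}U$ recorded earlier in this appendix. Both directions amount to chasing a few short chains of inequalities; no induction or auxiliary construction is needed.

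For the direction \textbf{(C2)} $\Rightarrow$ \textbf{(C1)}, I would start from the first saddle-point inequality of (C2), namely $U(\widehat{\sigma}^{1},\widehat{\sigma}^{2}) \le U(\sigma^{1},\widehat{\sigma}^{2})$. Taking $\inf_{\sigma^{1}}$ of both sides yields $U(\widehat{\sigma}^{1},\widehat{\sigma}^{2}) \le \inf_{\sigma^{1}}U(\sigma^{1},\widehat{\sigma}^{2})$; the reverse inequality is immediate because $\widehat{\sigma}^{1}$ is itself a candidate in the infimum, so the two are equal. Consequently $\sup_{\sigma^{2}}\inf_{\sigma^{1}}U \ge \inf_{\sigma^{1}}U(\sigma^{1},\widehat{\sigma}^{2}) = U(\widehat{\sigma}^{1},\widehat{\sigma}^{2})$. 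A completely symmetric argument starting from the second saddle-point inequality of (C2) yields $\inf_{\sigma^{1}}\sup_{\sigma^{2}}U \le U(\widehat{\sigma}^{1},\widehat{\sigma}^{2})$. Combining with the universal inequality then produces the sandwich
\[
U(\widehat{\sigma}^{1},\widehat{\sigma}^{2}) \le \sup_{\sigma^{2}}\inf_{\sigma^{1}}U \le \inf_{\sigma^{1}}\sup_{\sigma^{2}}U \le U(\widehat{\sigma}^{1},\widehat{\sigma}^{2}),
\]
which forces equality throughout, and this is exactly \textbf{(C1)}.

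For the direction \textbf{(C1)} $\Rightarrow$ \textbf{(C2)}, I would read (C1) as asserting that the pair $(\widehat{\sigma}^{1},\widehat{\sigma}^{2})$ jointly realizes both the min-max and the max-min. Writing $\widehat{U}$ for the common value, this means that $\widehat{\sigma}^{1}$ attains the outer $\inf$ in $\inf_{\sigma^{1}}\sup_{\sigma^{2}}U$ and $\widehat{\sigma}^{2}$ attains the outer $\sup$ in $\sup_{\sigma^{2}}\inf_{\sigma^{1}}U$, so that $\sup_{\sigma^{2}}U(\widehat{\sigma}^{1},\sigma^{2}) = \widehat{U}$ and $\inf_{\sigma^{1}}U(\sigma^{1},\widehat{\sigma}^{2}) = \widehat{U}$. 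The two inequalities of (C2) then follow in one line each: for every $\sigma^{2}$, $U(\widehat{\sigma}^{1},\sigma^{2}) \le \sup_{\tau^{2}}U(\widehat{\sigma}^{1},\tau^{2}) = \widehat{U} = U(\widehat{\sigma}^{1},\widehat{\sigma}^{2})$; and symmetrically for every $\sigma^{1}$, $U(\sigma^{1},\widehat{\sigma}^{2}) \ge \inf_{\tau^{1}}U(\tau^{1},\widehat{\sigma}^{2}) = \widehat{U} = U(\widehat{\sigma}^{1},\widehat{\sigma}^{2})$.

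The main delicate step, expected in the (C1) $\Rightarrow$ (C2) direction, is the extraction that $\sup_{\sigma^{2}}U(\widehat{\sigma}^{1},\sigma^{2})$ and $\inf_{\sigma^{1}}U(\sigma^{1},\widehat{\sigma}^{2})$ both equal $\widehat{U}$; this is exactly the content of "$\widehat{\sigma}^{1}$ is a minmax strategy" and "$\widehat{\sigma}^{2}$ is a maxmin strategy" from the earlier appendix definitions, and (C1) is meant to encode precisely this joint-attainment information about the pair. Once these two equalities are in hand, (C2) reduces to trivial bounding by the $\sup$ and $\inf$.
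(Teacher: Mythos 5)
Your proof is correct and follows essentially the same route as the paper's: the (C2) $\Rightarrow$ (C1) direction is the identical sandwich $U(\widehat{\sigma}^{1},\widehat{\sigma}^{2})\le\sup_{\sigma^{2}}\inf_{\sigma^{1}}U\le\inf_{\sigma^{1}}\sup_{\sigma^{2}}U\le U(\widehat{\sigma}^{1},\widehat{\sigma}^{2})$, and the (C1) $\Rightarrow$ (C2) direction in both treatments rests on reading $(\widehat{\sigma}^{1},\widehat{\sigma}^{2})$ as a minmax/maxmin (optimal) pair so that $\sup_{\sigma^{2}}U(\widehat{\sigma}^{1},\sigma^{2})=\inf_{\sigma^{1}}U(\sigma^{1},\widehat{\sigma}^{2})=\widehat{U}$. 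You are right to single out that extraction as the delicate step --- it does not follow from (C1) as literally written --- but the paper makes exactly the same implicit appeal (``since $\widehat{\sigma}^{1}$ is optimal, it is also a minmax strategy''), so your argument matches theirs in both substance and structure.
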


\begin{proof}
To prove that $\mathbf{C1}$ implies $\mathbf{C2}$ we note that, since
$\widehat{\sigma}^{1}$ (resp. $\widehat{\sigma}^{2}$)\ is optimal, it is also
a minmax (resp. maxmin)\ strategy. Hence from $\mathbf{C1}$ we have
\begin{align*}
\forall\sigma^{1},\sigma^{2}  &  :U\left(  \widehat{\sigma}^{1},\sigma
^{2}\right)  \leq\sup_{\sigma^{2}}U\left(  \sigma^{1},\sigma^{2}\right)
\Rightarrow\forall\sigma^{2}:U\left(  \widehat{\sigma}^{1},\sigma^{2}\right)
\leq\inf_{\sigma^{1}}\sup_{\sigma^{2}}U\left(  \sigma^{1},\sigma^{2}\right)
=U\left(  \widehat{\sigma}^{1},\widehat{\sigma}^{2}\right)  ,\\
\forall\sigma^{1},\sigma^{2}  &  :U\left(  \sigma^{1},\widehat{\sigma}%
^{2}\right)  \geq\inf_{\sigma^{1}}U\left(  \sigma^{1},\sigma^{2}\right)
\Rightarrow\forall\sigma^{1}:U\left(  \sigma^{1},\widehat{\sigma}^{2}\right)
\geq\sup_{\sigma^{2}}\inf_{\sigma^{1}}U\left(  \sigma^{1},\sigma^{2}\right)
=U\left(  \widehat{\sigma}^{1},\widehat{\sigma}^{2}\right)  .
\end{align*}
After rewriting (to improve clarity) we se that we have proved%
\begin{align*}
\forall\sigma^{2}  &  :U\left(  \widehat{\sigma}^{1},\sigma^{2}\right)  \leq
U\left(  \widehat{\sigma}^{1},\widehat{\sigma}^{2}\right)  ,\\
\forall\sigma^{1}  &  :U\left(  \sigma^{1},\widehat{\sigma}^{2}\right)  \geq
U\left(  \widehat{\sigma}^{1},\widehat{\sigma}^{2}\right)  ,
\end{align*}
which is $\mathbf{C2}$.

To prove that $\mathbf{C2}$ implies $\mathbf{C1}$ we note \cite[p.116,
p.144]{Maschler}\cite[p.43]{Thomas} that we can rewrite $\mathbf{C2}$ as%
\[
\forall\sigma^{1},\sigma^{2}:U\left(  \widehat{\sigma}^{1},\sigma^{2}\right)
\leq U\left(  \widehat{\sigma}^{1},\widehat{\sigma}^{2}\right)  \leq U\left(
\sigma^{1},\widehat{\sigma}^{2}\right)
\]
and then we have
\begin{align*}
&  \sup_{\sigma^{2}}U\left(  \widehat{\sigma}^{1},\sigma^{2}\right)  \leq
U\left(  \widehat{\sigma}^{1},\widehat{\sigma}^{2}\right)  \leq\inf
_{\sigma^{1}}U\left(  \sigma^{1},\widehat{\sigma}^{2}\right)  \Rightarrow\\
&  \inf_{\sigma^{1}}\sup_{\sigma^{2}}U\left(  \sigma^{1},\sigma^{2}\right)
\leq\sup_{\sigma^{2}}U\left(  \widehat{\sigma}^{1},\sigma^{2}\right)  \leq
U\left(  \widehat{\sigma}^{1},\widehat{\sigma}^{2}\right)  \leq\inf
_{\sigma^{1}}U\left(  \sigma^{1},\widehat{\sigma}^{2}\right)  \leq\sup
_{\sigma^{2}}\inf_{\sigma^{1}}U\left(  \sigma^{1},\sigma^{2}\right)
\Rightarrow\\
&  \inf_{\sigma^{1}}\sup_{\sigma^{2}}U\left(  \sigma^{1},\sigma^{2}\right)
=U\left(  \widehat{\sigma}^{1},\widehat{\sigma}^{2}\right)  =\sup_{\sigma^{2}%
}\inf_{\sigma^{1}}U\left(  \sigma^{1},\sigma^{2}\right)
\end{align*}
which is $\mathbf{C1}$.
\end{proof}

\end{document}